\documentclass[12pt,a4paper]{amsart}


\usepackage[usenames,dvipsnames,svgnames,table]{xcolor} 
\usepackage{amssymb,amsfonts,amsrefs}
\usepackage{amsmath,amsthm}

\usepackage{marginnote}

\usepackage{hyperref}
\usepackage[active]{srcltx}		
\usepackage{pdfsync}					
\usepackage[colorinlistoftodos]{todonotes}
\setlength{\textwidth}{15cm}
\setlength{\oddsidemargin}{0cm}
\setlength{\evensidemargin}{0cm}
\setlength{\topmargin}{0cm}
\setlength{\textheight}{23cm}
\linespread{1.3} 

\newcommand\R{{\mathbb{R}}}

\newcommand\N{{\mathbb{N}}}


\newtheorem{theorem}{Theorem}[section]
\newtheorem{proposition}[theorem]{Proposition}
\newtheorem{lemma}[theorem]{Lemma}

\theoremstyle{definition}

\theoremstyle{remark}
\newtheorem{remark}[theorem]{Remark}
\numberwithin{equation}{section}

\begin{document}

\title[Self-similar solutions to Oberbeck--Boussinesq  system]
{Large self-similar solutions\\ to Oberbeck--Boussinesq  system\\
 with Newtonian gravitational field}

\author{Lorenzo Brandolese}
\address{L. Brandolese: Universit\'e Claude Bernard Lyon 1, CNRS UMR 5208, Institut Camille Jordan, 43 blvd. du 11 novembre 1918, F-69622 Villeurbanne cedex, France\\
\href{https://orcid.org/0000-0002-3045-0016}{orcid.org/0000-0002-3045-0016}}
\email{brandolese{@}math.univ-lyon1.fr}
\urladdr{http://math.univ-lyon1.fr/$\sim$brandolese}

\author{Grzegorz Karch}
\address{G. Karch: Instytut Matematyczny, Uniwersytet Wroc\l awski pl.
Grunwaldzki 2, 50-384 Wroc\l aw, Poland\\
\href{https://orcid.org/0000-0001-9390-5578}{orcid.org/0000-0001-9390-5578}}
\email{karch{@}math.uni.wroc.pl}
\urladdr{http://www.math.uni.wroc.pl/$\sim$karch}

\begin{abstract}
The Navier-Stokes system for an incompressible fluid coupled with the equation for a heat transfer is considered in the whole three-dimensional space. 
This system is invariant under a suitable scaling.
Using the Leray--Schauder theorem and compactness arguments, we construct self-similar solutions to this system  without any smallness assumptions imposed on homogeneous initial conditions.
\end{abstract}

\keywords{Oberbeck--Boussinesq system, self-similar solutions, elliptic systems in unbounded domains}

\subjclass[2020]{35Q30; 76D03; 35J47; 35C06}

\thanks{The research was supported by the Arqus European University Alliance.}
\date{\today}

\maketitle


\bigskip
Published in Journal of Differential Equations,
Volume 424 (2025), Pages 139-158.

\section{Introduction}

The Oberbeck–Boussinesq system is a mathematical
model of stratified flows, where the fluid is assumed to be incompressible and 
convecting, due to the presence of buoyancy forces arising from temperature variations inside the fluid. The equation of the fuid motion is coupled with a diffusive equation for the deviation of the temperature from its equilibrium value. 
The resulting system has the following form
\begin{equation}\label{B}
\begin{split}
 \partial_t u +u\cdot\nabla u+\nabla p&=\Delta u+\theta \nabla G +f,\quad x\in \R^3,\; t\in \R_{+} \\
 \nabla\cdot u&=0, \\
  \partial_t \theta +u \cdot \nabla \theta &=  \Delta \theta, 
  \end{split}
\end{equation}
with the unknown  fluid velocity $u = u(x,t)$, the temperature $\theta = \theta(x,t)$ and the pressure $p=p(x,t)$.
In first equation of system \eqref{B}, the symbol $\nabla G$ denotes a gravitational force acting on the fluid
and $f=f(x,t)$ is  a given external force. 
The equations in  system \eqref{B} should contain important  physical parameters such as  
is the viscosity coefficient,  the heat conductivity coefficient, the fluid density, the reference temperature, the specific heat at constant pressure,  the coefficient of thermal expansion of the fluid. However, since these physical constants do not play any role in this work, we put all of them equal to one, for simplicity of the exposition.

It is customary to take $\nabla G = g(0,0,1)$, where the constant $g$ represents the acceleration rate caused by Earth's gravity. This is a reasonable approximation provided that the fluid occupies a bounded domain, where the gravitational field can be taken constant.
Moreover, several authors addressed various problems of mathematical interest for  system \eqref{B} on the whole space $\R^3$,
still adopting the choice of a constant gravity field $\nabla G = g(0,0,1)$:
see, {\it e.g}, \cites{CD80,DP08,KP08,BS12} and references therein.

{\color{blue}
But the choice of a constant gravity field 
brings buoyancy effects on the fluid 
that, in unbounded domains,
can lead to physically unrealistic situations. For example, as shown in \cite{BS12}, in the whole space~$\R^3$,
Oberbeck-Boussinesq flows with initially finite energy,  in the absence of
any external forcing, in general feature energy growth, with the kinetic energy eventually growing 
arbitrarily large for long times.}

As noticed by Feireisl and Schonbek \cite{FS12},
for system \eqref{B} in the whole space $\R^3$, from a physical point of view 
it is more appropriate to consider the gravitational potential of the following form 
$$ G(x) =-\int_{\R^3} \frac{1}{ |x - y|}m(y)\, dy, $$ where $m$ denotes the mass density of the object acting on the fluid by means of gravitation.
Moreover, if the size of the object is negligible, one is led to choosing $G(x)=-|x|^{-1}$. 
In fact, the Oberbeck–Boussinesq approximation \eqref{B}
with this singular gravitational force
can be rigorously derived as a singular limit
of the full Navier-Stokes–Fourier system  with suitable boundary conditions and with the Mach
and Froude numbers tending to zero and when the family of domains on which the primitive problems
are stated converges to the whole space $\R^3$, see \cites{FS12,WK17} and the references therein.  

For this reason, the present work is devoted to system \eqref{B} with singular gravitational force
$\nabla G(x)=\nabla |x|^{-1}$. We dropped the minus in front of the gradient because the sign
will not affect our results.

The Oberbeck-Boussinesq model
enjoys the following scaling property: 
 if $(u,  \theta)$ is a solution of system \eqref{B} with  
\begin{equation}\label{Gf}
G(x) = \frac{1}{|x|} \quad \text{and}\quad   f(x,t) = \frac{1}{{(\sqrt{2t})}^{3}}F\left(\frac{x}{\sqrt{2t}}\right),
\end{equation}
then 
\begin{equation}
 u_\lambda(x,t):=\lambda u(\lambda x,\lambda^2 t), 
 \qquad \theta_\lambda(x,t):=\lambda\theta(\lambda x,\lambda^2 t)
 \end{equation}
 is also a solution of the same system for each $\lambda>0$.
A {\it self-similar solution} to system \eqref{B}-\eqref{Gf} is, by definition, a solution which is left invariant by this rescaling:
$
(u,\theta)= (u_\lambda, \theta_\lambda)
$
for every $\lambda>0$.
Equivalently, by choosing $\lambda=1/\sqrt{2t}$,
self-similar solutions are those that can be written in the form
\begin{equation} \label{ut:self}
u(x,t)=\frac{1}{\sqrt{2t}}U\Bigl(\frac{x}{\sqrt{2t}}\Bigr),
\qquad
\theta(x,t)=\frac{1}{\sqrt{2t}}\Theta\Bigl(\frac{x}{\sqrt{2t}}\Bigr),
\end{equation}
with
$U(x)=u(x,1/2)$  and $\Theta(x)=\theta(x,1/2)$.

If system \eqref{B} is supplemented with an initial condition
\begin{equation}\label{ini}
u(x,0)=u_0(x), \qquad \theta(x,0) =\theta_0(x),
\end{equation}
in the case of a self-similar solution, it has to be also invariant under the scaling 
$(u_{0,\lambda},\theta_{0,\lambda})(x)=\lambda(u_0,\theta_0)(\lambda x)$, which means 
that these are homogeneous function of degree $-1$.
In this work, we construct self-similar solutions of system \eqref{B}-\eqref{Gf}
with arbitrary, not necessarily small initial conditions \eqref{ini} which are homogeneous of degree $-1$ and essentially bounded on the unit sphere of $\R^3$.
The main result of this work is stated in the following theorem.

\begin{theorem}
\label{th:main}
Let $(u_0,\theta_0)\in {\bf L}^\infty_{\rm loc}(\R^3\backslash\{0\})$ be homogeneous of degree $-1$, with $\mbox{$\nabla\cdot u_0$}=0$.
Let the external force $f(x,t)$ be of the form as in \eqref{Gf} with the profile 
{\color{blue}$F\in H^{-1}(\R^3)^3$}.
Then there exists a self-similar solution $(u,p)$ 
of system \eqref{B}-\eqref{Gf}.
This solution has the following properties:
\begin{itemize}
    \item $(u,\theta)\in C_w([0,\infty),{\bf L}^{3,\infty}(\R^3))$;
    \item  for some constants $c,c'\ge0$ and all $t>0$,
\begin{equation}
\label{timeest}
\begin{split}
&\|u(t)-e^{t\Delta}u_0\|_2+\|\theta(t)-e^{t\Delta}\theta_0\|_2=ct^{1/4},\\
& \|\nabla u(t)-\nabla e^{t\Delta}u_0\|_2+\|\nabla\theta(t)-\nabla e^{t\Delta}\theta_0\|_2
= c't^{-1/4}.
\end{split}
\end{equation}
\end{itemize}
\end{theorem}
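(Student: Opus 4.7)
Substituting the self-similar ansatz \eqref{ut:self} into \eqref{B}--\eqref{Gf} and evaluating at $t=1/2$, the profiles $(U,\Theta,P)$ satisfy the stationary elliptic system
\begin{equation*}
\begin{split}
&-\Delta U - y\cdot\nabla U - U + (U\cdot\nabla)U + \nabla P=\Theta\,\nabla\bigl(|y|^{-1}\bigr)+F,\quad \nabla\cdot U=0,\\
&-\Delta\Theta - y\cdot\nabla\Theta - \Theta + U\cdot\nabla\Theta=0\qquad\text{in }\R^3.
\end{split}
\end{equation*}
I would then split $U=V_0+\tilde U$ and $\Theta=W_0+\tilde\Theta$ with $V_0:=e^{\Delta/2}u_0$, $W_0:=e^{\Delta/2}\theta_0$, and seek the perturbations in $H^1(\R^3)$. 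Since $u_0,\theta_0$ are $(-1)$-homogeneous and essentially bounded on $\mathbb{S}^2$, they lie in the scale-invariant space ${\bf L}^{3,\infty}(\R^3)$, which ensures that $V_0,W_0$ are smooth, still in ${\bf L}^{3,\infty}$, and that their gradients inherit Gaussian-type decay from the heat kernel.

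The construction of $(\tilde U,\tilde\Theta)$ would proceed via the Leray--Schauder fixed-point theorem applied to the map $T$ which, given $(v,w)\in H^1\times H^1$, returns the solution of the linear elliptic system obtained by freezing the transport velocity as $V_0+v$ and the buoyancy source as $W_0+w$. Well-posedness and continuity of $T$ rest on the coercivity of the Ornstein--Uhlenbeck-type operator $-\Delta - y\cdot\nabla - I$: a direct integration by parts yields $\int(-y\cdot\nabla\tilde U-\tilde U)\cdot\tilde U\dd y=\tfrac12\|\tilde U\|_{{\bf L}^2}^2$, so that the quadratic form $\|\nabla\tilde U\|_{{\bf L}^2}^2+\tfrac12\|\tilde U\|_{{\bf L}^2}^2$ dominates all lower-order contributions. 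Compactness of $T$ would follow from the one-derivative gain of the elliptic inversion combined with the uniform decay at infinity forced by the drift $y\cdot\nabla$, which upgrades local Rellich compactness into a global statement on the unbounded domain $\R^3$.

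The main obstacle, as in every large-data fixed-point construction, is the uniform a priori bound $\|\tilde U\|_{H^1}+\|\tilde\Theta\|_{H^1}\le C$ for every solution of $(\tilde U,\tilde\Theta)=\sigma T(\tilde U,\tilde\Theta)$, $\sigma\in[0,1]$, which has to be obtained without any smallness on the data. Testing the two equations against $\tilde U$ and $\tilde\Theta$ respectively, the transport contributions $\int(U\cdot\nabla)\tilde U\cdot\tilde U$ and $\int(U\cdot\nabla)\tilde\Theta\,\tilde\Theta$ vanish thanks to $\nabla\cdot U=0$. The most delicate term is the singular buoyancy coupling $\int\Theta\,\nabla(|y|^{-1})\cdot\tilde U$, to be tamed via Hardy's inequality $\bigl\||y|^{-1}f\bigr\|_{{\bf L}^2}\lesssim\|\nabla f\|_{{\bf L}^2}$; the residual cross terms involving $V_0,W_0$ should be handled by H\"older/Sobolev duality of the type ${\bf L}^{3,\infty}\cdot H^1\hookrightarrow {\bf L}^2$, while $F\in H^{-1}$ pairs directly with $\tilde U\in H^1$.

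Once the elliptic system has been solved, the properties in Theorem~\ref{th:main} would follow from the self-similar scaling itself: the identity $u(x,t)-e^{t\Delta}u_0(x)=(2t)^{-1/2}\tilde U(x/\sqrt{2t})$, together with its gradient analogue, translates the ${\bf L}^2$ and $\dot H^1$ norms of $\tilde U,\tilde\Theta$ into the time-dependent bounds \eqref{timeest} with $c\propto\|\tilde U\|_{{\bf L}^2}$, $c'\propto\|\nabla\tilde U\|_{{\bf L}^2}$, and similarly for $\tilde\Theta$; the scale-invariance of ${\bf L}^{3,\infty}$ combined with the self-similar structure and the action of the heat semigroup on the data then provides the weak continuity $(u,\theta)\in C_w([0,\infty);{\bf L}^{3,\infty}(\R^3))$.
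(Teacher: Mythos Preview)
Your overall architecture---reduce to the elliptic profile system, subtract the heat-evolved data $V_0=e^{\Delta/2}u_0$, $W_0=e^{\Delta/2}\theta_0$, solve for the perturbation in $H^1$ via Leray--Schauder, then recover the time-dependent statements by scaling---matches the paper exactly. The derivation of \eqref{timeest} and of the weak continuity in ${\bf L}^{3,\infty}$ from the self-similar structure is also correct and is precisely what the paper does in its final section.

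The genuine gap is in the a~priori estimate. You assert that after testing with $\tilde U$ the transport terms vanish and ``the residual cross terms involving $V_0,W_0$ should be handled by H\"older/Sobolev duality''. But the term
\[
\int_{\R^3}(\tilde U\cdot\nabla V_0)\cdot\tilde U
\]
is \emph{quadratic} in the unknown with a coefficient $\nabla V_0$ that is \emph{not small}; any estimate of the form $\|\nabla V_0\|_{L^{3,\infty}}\|\tilde U\|_{L^6}^2$ or $\|\nabla V_0\|_{L^\infty}\|\tilde U\|_{L^2}^2$ produces a term on the right that cannot be absorbed into the left without a smallness assumption on the data. Likewise, the buoyancy coupling $\int\tilde\Theta\,\nabla(|y|^{-1})\cdot\tilde U$, even after Hardy, gives $C\|\nabla\tilde\Theta\|_2\|\nabla\tilde U\|_2$, which is bilinear in the unknowns and again unabsorbable. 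This is exactly the obstruction that makes the large-data problem hard; a direct energy balance does not close. The paper overcomes this by a contradiction argument (Propositions~2.1 and~2.6): assuming the $H^1$-norms blow up along a sequence, one normalises, extracts a weak limit $\widetilde V$, shows it satisfies the stationary Euler equations $\widetilde V\cdot\nabla\widetilde V+\nabla\widetilde P=0$, and then uses the structure of Euler (via the vanishing of $\int(\widetilde V\cdot\nabla\widetilde V)\cdot U_0$) to contradict a limiting identity. A separate case analysis on the ratio $J_k/L_k$ of the velocity and temperature norms handles the coupling. None of this is visible in your plan.

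A secondary issue: applying Leray--Schauder directly on $\R^3$ requires compactness there, which you attribute to ``the drift $y\cdot\nabla$''. This is plausible heuristically but delicate to make rigorous, and the singular potential $\nabla(|y|^{-1})$ makes the map $T$ itself awkward to define. The paper sidesteps both problems by working first on balls $B_k$ (where Rellich gives compactness for free) with a regularised potential $\rho_k\nabla(|y|^{-1})$, obtaining a solution there by Leray--Schauder, and then passing to the limit $k\to\infty$ using a second, $k$-uniform, a~priori bound---proved by the same contradiction mechanism.
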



Theorem \ref{th:main}
 extends to the Oberbeck–Boussinesq system \eqref{B}-\eqref{Gf} the 
 well-known existence result of large
({\it i.e.}~with no size restriction on the initial data) forward self-similar solutions to the Navier--Stokes equations, first established by Jia and \v{S}ver\'ak~\cite{JS14}.

Let us briefly review related results on self-similar solutions to the Navier-Stokes system and the Oberbeck–Boussinesq system.
If initial data are sufficiently small, unique mild solutions to the Cauchy problem either for the Navier-Stokes system or the Oberbeck–Boussinesq system can be obtained via the contraction mapping
argument applied to integral formulations of these problems. If the considered space has a scaling-invariant norm and contains homogeneous initial conditions, the uniqueness property ensures that obtained solutions are self-similar.
This approach was first applied to the Cauchy problem for the Navier-Stokes system 
by Giga-Miyakawa \cite{GM89} and refined by Kato \cite{K92} and Cannone-Meyer-Planchon \cites{CMP94,CP96}, see also {\it e.g.} \cites{K99,Y00,C04,B09}
and references therein. 
Methods of constructing small self-similar solutions corresponding to small initial conditions were then applied to other models including the Oberbeck–Boussinesq, see {\it e.g.}~\cites{KP08,AF11,FV10}.

For large initial conditions, the contraction mapping argument no longer works and the question on the existence of large self-similar solutions remained open until the seminal paper by 
Jia and \v{S}ver\'ak~\cite{JS14} who constructed self-similar solutions of the three-dimensional Navier-Stokes system,
supplemented with a homogeneous not necessarily small initial condition which is H\"older continuous outside the origin.
In their construction, they used the theory of local Leray solutions in $L^2_{uloc}$,
the space of uniformly locally $L^2$-functions, developed by 
 Lemari\'e-Rieusset \cite{LR02} and they obtained  
 a local H\"older estimate for local Leray solutions near $t = 0$, assuming minimal control
of initial data. That estimate enables them to prove {\it a priori} estimates of
self-similar solutions, and then to show their existence by the Leray-Schauder degree theorem. 
The results of Jia and \v{S}ver\'ak~\cite{JS14} were then extended either to discretely self-similar solutions 
of the Navier-Stokes system in the whole or a half-space \cites{T14, KT16,BT17,BT18,BT19, BP23}
 or to the fractional Navier-Stokes system \cite{LMZ19}
 or to the MHD  system~\cite{L19}.

Now let us describe the strategy of proving Theorem \ref{th:main}.
Substituting expressions \eqref{ut:self} into system \eqref{B}-\eqref{Gf} allows us to eliminate 
 the time variable and 
we are led to construct a solution $(U,P, \Theta)$ to the following elliptic system
\begin{equation}
\label{BSS}
\begin{split}
-\Delta U -U-(x\cdot\nabla) U+(U\cdot\nabla)U+\nabla P&=\Theta\,\nabla(|\cdot|^{-1})+F,\\
\nabla\cdot U&=0,\\
-\Delta \Theta-\Theta-(x\cdot\nabla)\Theta+\nabla(\Theta U)&=0.
\end{split}
\end{equation}
In this construction, we use  solutions to the heat equation given as convolutions of the Gauss-Weierstrass kernel with homogeneous initial data which are themselves of
self-similar form: in particular, by our assumptions on $u_0$ and $\theta_0$, we have
\begin{equation}\label{heat}
e^{t\Delta}u_0(x)=\frac{1}{\sqrt{2t}}U_0\left(\frac{x}{\sqrt{2t}}\right) 
\quad \text{and} \quad
e^{t\Delta}\theta_0(x)=\frac{1}{\sqrt{2t}}\Theta_0\left(\frac{x}{\sqrt{2t}}\right),
\end{equation}
where the self-similar profiles 
\begin{equation}
\label{UT0}
U_0:=e^{\Delta/2}u_0
\quad\text{and}\quad
\Theta_0:=e^{\Delta/2}\theta_0
\end{equation}
have properties recalled below in Proposition \ref{prop:heat}.
Then, rather than studying directly system~\eqref{BSS}, we will consider the perturbations
\[
V=U-U_0
\quad\text{and}\quad
\Psi=\Theta-\Theta_0,
\]
which satisfy the elliptic system
\begin{equation}
\label{DSS}
\begin{split}
-\Delta V-V-(x\cdot\nabla)V+(V+U_0)\cdot\nabla(V+U_0)+\nabla P&=(\Psi+\Theta_0)\,\nabla(|\cdot|^{-1})+F,\\
\nabla \cdot V&=0,\\
-\Delta \Psi-\Psi-(x\cdot\nabla)\Psi+\nabla\cdot\bigl((\Psi+\Theta_0)(V+U_0)\bigr)&=0.
\end{split}
\end{equation}
In the next section, we will construct solutions of system \eqref{DSS} in the Sobolev space $H^1(\R^3)^4$, 
see~Theorem~\ref{prop:exi-R3} below, and 
we deduce Theorem~\ref{th:main} as a direct corollary.
Our strategy of studying system \eqref{DSS} is closely inspired by  the paper of Korobkov and Tsai \cite{KT16}, where they established a similar result for the Navier--Stokes equations in the half-space.
In that approach, we first solve system \eqref{DSS} supplemented with the Dirichlet boundary condition in a ball by using the Leray--Schauder theorem. Then, we obtain a solution in the whole space by passing with the radius of the ball to infinity and using an $H^1$-estimate of the sequence of solutions which is independent of the radius of the ball.
The singular nature of the forcing term arising from the temperature variations and the coupling both bring a few new technical difficulties. 
We overcome them by means of an approximation procedure and suitable {\it a priori} estimates, whose derivation does not appear to be so standard 
(see, {\it e.g.}~the contradiction argument contained in the
proofs of Propositions~\ref{prop:1ae} and \ref{prop:invading}).
Assuming that the components of the initial data are $L^\infty_{\rm loc}(\R^3\backslash\{0\})$ and not, e.g., only $L^2_{\rm uloc}$, considerably simplifies the presentation. This makes possible to provide a proof which is more elementary than that presented in~\cites{JS14,LR16} in the case of the Navier--Stokes equations with external forces, despite the Oberbeck--Boussinesq \eqref{B}-\eqref{Gf} system being
more general.

As the usual practice, we skip any comment about the pressure in this introduction, because it disappears in the weak formulation 
of system \eqref{B}
and because it can be obtained in the well-known way 
by applying the divergence operator to first equation in system \eqref{B}. 
Here, we only mention that the  pressure 
corresponding to the self-similar solution constructed in Theorem \ref{th:main}
is self-similar of the form 
$
p(x,t)={(2t)^{-1}}P\Bigl({x}/{\sqrt{2t}}\Bigr).
$

{\color{blue}The result of Theorem~\ref{th:main} remains true also when an additional term in the equation of the temperature, of the form $\widetilde f(\cdot,t)=(\sqrt{2t})^{-1}\widetilde F(\cdot/\sqrt{2t})$, with $\widetilde F\in H^{-1}(\R^3)$, is present. Such a term would arise when an external self-similar heating source acts on the fluid.
A similar result holds also for the Boussinesq system in the half-space, as it is the case for
the construction of self-similar solutions for the Navier--Stokes system made in \cite{KT16}.

We also point out that, after the first version of this paper was made available in arXiv, an alternative construction of self-similar solutions for the Oberbeck-Boussinesq system using a different method, effective also for proving the existence of discretely self-similar solutions, was proposed by Tsai in~\cite{Tsai-preprint}.
In both Tsai's and  our approaches, the special structure of the Newtonian gravitational field plays an important role
in establishing the relevant {\it a priori} estimates.
For this reason, our result does not seem to easily extend to the case of a constant gravitational field,
as considered in  \cites{DP08, KP08, BS12}, or to more general gravitational fields, as in
\cite{CD80}.

The arXiv version of this paper also inspired the author of \cite{YY24} to construct forward self-similar solutions
to the three-dimensional Magnetohydrodynamics-Boussinesq system
with Newtonian gravitational field.
}

\subsection*{Notations}
The symbol $\|\cdot\|_p$ denotes the usual Lebesgue $L^p$-norm. The space $L^{p,q}(\Omega)$
are the Lorentz spaces.
If $\Omega$ is a domain of $\R^3$, then we denote by $C^\infty_{0,\sigma}(\Omega)^3$ the space
of smooth, divergence-free vector fields, with support contained in $\Omega$.
We denote by ${\bf H}(\Omega)$ the closure of $C^\infty_{0,\sigma}(\Omega)^3\times C^\infty_0(\Omega)$ in the Sobolev space
$H^1(\Omega)^4$.
In general, we adopt bold symbols for function spaces of $4$-dimensional vector-valued functions. For example, ${\bf L}^p(\Omega)=L^p(\Omega)^3\times L^p(\Omega)$
and ${\bf C}^1(\Omega)=C^1(\Omega)^3\times C^1(\Omega)$. 
The constants in the estimates below are denoted by the same letter $C$, even if they vary from line to line.


\section{Analysis of the perturbed elliptic system}
\label{sec:stat}

We begin by establishing a few simple properties of self-similar solutions to the heat equation that will be useful in the sequel.

\begin{proposition}\label{prop:heat}
    Let $(u_0,\theta_0)\in {\bf L}^\infty_{\rm loc}(\R^3\backslash\{0\})$ be homogeneous of degree $-1$ 
with  $\nabla\cdot u_0=0$ and consider the corresponding self-similar solutions of the heat equation
$e^{t\Delta}u_0$ and $e^{t\Delta}\theta_0$ written in the form \eqref{heat}.
Then, the profiles $U_0$ and $\Theta_0$
 satisfy the following equations
\begin{equation}
\label{eq:ssheat}
\begin{split}
&U_0+x\cdot \nabla U_0+\Delta U_0=0,\qquad
\nabla\cdot U_0=0,\\
&\Theta_0+x\cdot \nabla \Theta_0+\Delta \Theta_0=0
\end{split}
\end{equation}
and the estimates
\begin{equation}
\label{uoto}
\begin{split}
& |U_0(x)|+|\Theta_0(x)| \le C(1+|x|)^{-1},\\
& |\nabla U_0(x)|+|\nabla \Theta_0(x)|\le C(1+|x|)^{-1},
\end{split}
\end{equation}
for all $x\in\R^3$ and  some constant $C>0$ independent on $x$.
\end{proposition}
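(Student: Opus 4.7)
The plan is to treat the two claims in turn: first derive the stationary self-similar equations \eqref{eq:ssheat} by direct substitution of the self-similar ansatz \eqref{heat} into the heat equation, then prove the pointwise decay bounds \eqref{uoto} via a Gaussian-convolution splitting argument exploiting the $-1$-homogeneity.

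For the first part, I would simply differentiate $e^{t\Delta}u_0(x) = (2t)^{-1/2} U_0(x/\sqrt{2t})$ in $t$ and in $x$, and set $t = 1/2$. A direct computation yields
\[
\partial_t\!\left[(2t)^{-1/2} U_0\!\left(\tfrac{x}{\sqrt{2t}}\right)\right]\bigg|_{t=1/2}
= -U_0(x) - x\cdot\nabla U_0(x),
\]
while
\[
\Delta_x\!\left[(2t)^{-1/2} U_0\!\left(\tfrac{x}{\sqrt{2t}}\right)\right]\bigg|_{t=1/2}
= \Delta U_0(x).
\]
The heat equation $\partial_t(e^{t\Delta}u_0) = \Delta(e^{t\Delta}u_0)$ then gives the first equation in \eqref{eq:ssheat}. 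The divergence-free condition for $U_0$ is inherited from that of $u_0$ because the heat semigroup commutes with $\nabla\cdot$. The same derivation, applied to $\theta_0$, produces the equation for $\Theta_0$.

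For the decay estimates, the starting point is that a $-1$-homogeneous function in $L^\infty_{\rm loc}(\R^3\setminus\{0\})$ satisfies the pointwise bound $|u_0(y)| \le C_0 |y|^{-1}$ with $C_0 = \|u_0\|_{L^\infty(S^2)}$, and similarly for $\theta_0$. Writing
\[
U_0(x) = \int_{\R^3} G_{1/2}(x-y) u_0(y)\,dy,
\qquad G_{1/2}(z) = (2\pi)^{-3/2} e^{-|z|^2/2},
\]
the boundedness near the origin is immediate: since $|y|^{-1}$ is locally integrable and $G_{1/2}$ is bounded, the integral is uniformly bounded on, say, $|x|\le 1$. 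For $|x|\ge 1$ I would split the integration into $\{|y|\le |x|/2\}$ and $\{|y|\ge |x|/2\}$. In the first region, $|x-y|\ge |x|/2$ so $G_{1/2}(x-y)\le C e^{-|x|^2/16}$, while $\int_{|y|\le |x|/2} |y|^{-1}dy \le C|x|^2$, producing an exponentially small contribution. In the second region, $|u_0(y)|\le 2C_0/|x|$, so pulling this bound out and using $\int G_{1/2} = 1$ yields a bound of order $|x|^{-1}$. Combining the two regimes gives $|U_0(x)|\le C(1+|x|)^{-1}$, and the identical argument works for $\Theta_0$.

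The gradient bounds follow by the same strategy, differentiating under the integral sign and replacing $G_{1/2}$ by $\nabla G_{1/2}$, which also has Gaussian decay; the contribution from $\{|y|\le |x|/2\}$ is again exponentially small, and the contribution from $\{|y|\ge |x|/2\}$ is controlled by $2C_0/|x|$ times $\|\nabla G_{1/2}\|_1$, giving the stated bound. The only mildly delicate point is the splitting threshold and keeping track of the fact that the exponentially-decaying piece beats any polynomial factor from the $|y|^{-1}$ integral; this is routine once the split is in place. No genuinely hard step is expected, as everything reduces to explicit Gaussian estimates combined with the scaling structure of $u_0$ and $\theta_0$.
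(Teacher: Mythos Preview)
Your proposal is correct and follows essentially the same approach as the paper. The paper also writes $U_0=G_{1/2}*u_0$, $\nabla U_0=(\nabla G_{1/2})*u_0$, observes that $|x|(|u_0(x)|+|\theta_0(x)|)\in L^\infty$, and then appeals to ``a simple convolution estimate, relying on the fast decay of $G_{1/2}$ and $\nabla G_{1/2}$'' for the decay bounds---which is exactly the splitting argument you spell out; the only cosmetic difference is that the paper obtains the global $L^\infty$ bound via the Young inequality in Lorentz spaces ($G_{1/2},\nabla G_{1/2}\in L^{3/2,1}$, $u_0,\theta_0\in L^{3,\infty}$) rather than by a direct near-origin estimate.
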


\begin{proof}
From our assumptions, we deduce that the map
$x\mapsto |x|(|u_0(x)|+|\theta_0(x)|)$ is in $L^\infty(\R^3)$.
Let us denote by $G_t(x)=(4\pi t)^{-3/2}\exp(-|x|^2/(4t))$ the heat kernel.
We do have $U_0=G_{1/2}*u_0$, $\Theta_0=G_{1/2}*\theta_0$
and $\nabla U_0=(\nabla G_{1/2})*u_0$, $\nabla\Theta_0=(\nabla G_{1/2})*\theta_0$.
But $(\nabla G_{1/2},G_{1/2})$ belong to the space ${\bf L}^{3/2,1}(\R^3)$
(and to any other Lorentz space $L^{p,q}(\R^3)$, for $1< p,q\le \infty$). 
As $(u_0,\theta_0)\in {\bf L}^{3,\infty}(\R^3)$, the Young inequality for Lorentz spaces (see
\cite{LR02}*{Ch.~2}) implies that
$(U_0,\Theta_0)\in {\bf L}^\infty(\R^3)$.
From a simple convolution estimate, relying on the fast decay of
$G_{1/2}$ and $\nabla G_{1/2}$ at the spatial infinity,
we easily deduce estimates~\eqref{uoto}.
Moreover, since $u_0$ is divergence-free, it results that $\nabla\cdot U_0=0$.
The two other equations in~\eqref{eq:ssheat} are well known and follow from the scaling invariance of the heat equation.
\end{proof}

\begin{remark}
The second estimate in~\ref{uoto} estimate could be easily improved to $|\nabla U_0(x)|+|\nabla \Theta_0(x)|\le C(1+|x|)^{-2}$, but we will do not need this better decay, as
our arguments just require 
{\color{blue} on $\nabla U_0$ and $\nabla \Theta_0$ that, e.g., ($\nabla U_0,\nabla \Theta_0)$ belong to ${\bf L}^4(\R^3)$.}
\end{remark}

\subsection{A priori estimates for a perturbed elliptic system in bounded domains}
First, we construct solutions to system \eqref{DSS} considered in a bounded domain $\Omega\subset\R^3$ with a smooth boundary.
For technical reasons, we introduce a smooth, bounded function  $\rho\in C_b(\R^3)$ 
which will be used in the next section to cut off the singularity at zero of the potential $|\cdot|^{-1}$.
In view of the application of the Leray-Schauder theorem,
 our first goal is to derive {\it a priori} estimates independent of $\lambda\in [0,1]$ of solutions to the system
\begin{equation}
\label{LSS}
\begin{split}
-\Delta V+\nabla P
&=\lambda\Bigl(V+x\cdot\nabla V+F_0+F_1(V)+(\Psi+\Theta_0)\,\rho\nabla(|\cdot|^{-1})+F\Bigr),\\
\nabla\cdot V&=0,\\
-\Delta\Psi
&=\lambda\Bigl(\Psi+x\cdot\nabla\Psi-\nabla\cdot\bigl((\Psi+\Theta_0)(V+U_0)\bigr)\Bigr),\\
\end{split}
\qquad x\in\Omega
\end{equation}
where, for simplicity of notation, 
we set
\begin{equation*}
F_0:=-U_0\cdot\nabla U_0,\quad\text{and}\quad 
F_1(V):=-(U_0+V)\cdot\nabla V - V\cdot\nabla U_0.
\end{equation*}
and which we 
supplement with the Dirichlet boundary conditions
\begin{equation}\label{LSS:B}
    V=0 \qquad\text{and}\qquad \Psi=0 \qquad\qquad \text{on} \quad \partial\Omega.
\end{equation}
 
\begin{proposition}
\label{prop:1ae}
Let $\Omega$ be a bounded domain in $\R^3$ with a  smooth boundary and
let $\rho\in C_b(\R^3)$.
Assume that 
$(U_0,\Theta_0)\in {\bf L}^\infty(\R^3)$ does satisfy the decay estimates~\eqref{uoto}  
{\color{blue} 
and $F\in H^{-1}(\Omega)^3$.}
Let $\lambda\in[0,1]$ and $(V,\Psi)\in {\bf H}(\Omega)$ be solutions
to problem~\eqref{LSS}-\eqref{LSS:B}. Then there exists a constant 
$C_0=C_0(\Omega,F,\rho,U_0,\Theta_0)$, independent on $\lambda$, such that
\begin{equation}
\label{APB}
\int_\Omega\Bigl(|V|^2+\Psi^2+|\nabla V|^2+|\nabla \Psi|^2\Bigr)
\le C_0. 
\end{equation}
\end{proposition}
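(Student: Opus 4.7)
The plan is to derive the bound from the natural energy identity obtained by testing \eqref{LSS} against $(V,\Psi)$, and to close the estimate by a compactness/contradiction argument when direct absorption into the left-hand side is not available. Testing the first equation against $V$ and the third against $\Psi$ is legitimate since $(V,\Psi)\in\mathbf{H}(\Omega)$: the pressure term vanishes by $\nabla\cdot V=0$; the self-transport $\int((U_0+V)\cdot\nabla V)\cdot V$ and its $\Psi$-analogue $\int\Psi V\cdot\nabla\Psi$ vanish thanks to $\nabla\cdot(U_0+V)=0$ combined with the homogeneous Dirichlet boundary conditions; and integration by parts gives $\int V\cdot V + \int(x\cdot\nabla V)\cdot V=-\tfrac12\|V\|_2^2$ (same for $\Psi$). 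Adding the two identities produces
\begin{equation*}
\|\nabla V\|_2^2 + \|\nabla\Psi\|_2^2 + \tfrac{\lambda}{2}\bigl(\|V\|_2^2+\|\Psi\|_2^2\bigr) = \lambda\, R(V,\Psi),
\end{equation*}
with
\begin{equation*}
R(V,\Psi) = -\!\int U_0\!\cdot\!\nabla U_0\!\cdot\! V - \!\int V\!\cdot\!\nabla U_0\!\cdot\! V + \!\int(\Psi+\Theta_0)\rho\nabla(|\cdot|^{-1})\!\cdot\! V + \langle F,V\rangle + \!\int\Theta_0(V+U_0)\!\cdot\!\nabla\Psi.
\end{equation*}

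I would then estimate $R(V,\Psi)$ using Cauchy--Schwarz, the Sobolev embedding $H^1_0(\Omega)\hookrightarrow L^6(\Omega)$, the $L^\infty$ bounds on $U_0,\Theta_0,\nabla U_0$ from Proposition~\ref{prop:heat}, and -- crucially -- the Hardy inequality $\int_\Omega |V|^2/|x|^2 \le C\|\nabla V\|_2^2$ (valid on $H^1_0(\Omega)$) to tame the $|x|^{-2}$ singularity of $\rho\nabla(|\cdot|^{-1})$. The purely linear terms are controlled by $C(\|V\|_{H^1}+\|\Psi\|_{H^1})$ and are absorbed via Young's inequality. The genuinely bilinear pieces, namely $\int V\cdot\nabla U_0\cdot V$, $\int\Psi\rho\nabla(|\cdot|^{-1})\cdot V$ and $\int\Theta_0 V\cdot\nabla\Psi$, are bounded only by $C_{\#}(\|V\|_{H^1}^2+\|V\|_{H^1}\|\Psi\|_{H^1})$, where $C_{\#}$ depends on $\|\rho\|_\infty$, $\|\Theta_0\|_\infty$, $\|\nabla U_0\|_\infty$ and the Hardy constant, and is \emph{not} assumed small; direct absorption into the left-hand side therefore fails, and a non-trivial argument is needed.

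To overcome this, I would argue by contradiction. Suppose \eqref{APB} fails: there exist sequences $\lambda_n\in[0,1]$ and solutions $(V_n,\Psi_n)\in\mathbf{H}(\Omega)$ with $a_n:=(\|\nabla V_n\|_2^2+\|\nabla\Psi_n\|_2^2)^{1/2}\to\infty$. Rescale $\widetilde V_n:=V_n/a_n$, $\widetilde\Psi_n:=\Psi_n/a_n$, so that the pair has unit Dirichlet norm; up to a subsequence, weak $H^1$-compactness, Rellich--Kondrachov (strong convergence in $\mathbf{L}^q(\Omega)$ for every $q<6$) and compactness of $[0,1]$ produce weak limits $(\widetilde V,\widetilde\Psi)\in\mathbf{H}(\Omega)$ and $\lambda_n\to\lambda_*$. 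Dividing the energy identity by $a_n^2$ and passing to the limit (linear and forcing contributions vanish at rate $O(1/a_n)$; bilinear ones converge thanks to strong $L^q$ convergence paired with $\rho\nabla(|\cdot|^{-1})\in L^{3/2-\epsilon}(\Omega)$ for the singular term) yields
\begin{equation*}
1+\tfrac{\lambda_*}{2}\bigl(\|\widetilde V\|_2^2+\|\widetilde\Psi\|_2^2\bigr) = \lambda_*\!\left[-\!\int\widetilde V\!\cdot\!\nabla U_0\!\cdot\!\widetilde V + \!\int\widetilde\Psi\rho\nabla(|\cdot|^{-1})\!\cdot\!\widetilde V + \!\int\Theta_0\widetilde V\!\cdot\!\nabla\widetilde\Psi\right].
\end{equation*}
Passing to the same limit at the level of the PDE (after division by $a_n^2$, the only surviving nonlinear contribution is $\lambda_*\widetilde V\cdot\nabla\widetilde V$), the pair $(\widetilde V,\widetilde\Psi)$ is found to solve the stationary system $\lambda_*\widetilde V\cdot\nabla\widetilde V+\nabla P_* = 0$, $\nabla\cdot\widetilde V = 0$, $\lambda_*\nabla\cdot(\widetilde V\widetilde\Psi)=0$, with vanishing trace on $\partial\Omega$.

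If $\lambda_*=0$ the displayed identity collapses to $1=0$, an immediate contradiction. The \emph{main obstacle} is the case $\lambda_*>0$: one must rule out non-trivial solutions of this stationary Euler-plus-transport limit system with no-slip/zero-trace boundary data that are compatible with the limit identity. I expect this step to require the specific structure of the Newtonian gravitational term $\nabla(|\cdot|^{-1})$ and Korobkov--Tsai-type integration-by-parts identities applied to the steady Euler piece \cite{KT16}, forcing $(\widetilde V,\widetilde\Psi)\equiv 0$ and therefore the contradiction $1=0$, which yields the desired uniform bound.
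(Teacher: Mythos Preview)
Your overall strategy---energy identity plus a contradiction argument via normalization and compactness---matches the paper's, but the argument does not close as written.

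The main gap is in the endgame. Your expectation that the limit Euler-plus-transport system forces $(\widetilde V,\widetilde\Psi)\equiv 0$ is not how the contradiction is obtained, and no general principle rules out nontrivial $H^1_0$ stationary Euler flows on bounded domains. What the paper does is show that each term on the right-hand side of the limit energy identity \emph{vanishes}, giving $1+(\text{nonnegative})=0$. For $\int(\widetilde V\cdot\nabla U_0)\cdot\widetilde V$ this uses the Euler structure together with the Kapitanski\u{\i}--Piletskas/Amick lemma that the limit pressure can be taken to vanish on~$\partial\Omega$, so that $\int(\widetilde V\cdot\nabla U_0)\cdot\widetilde V=-\int(\widetilde V\cdot\nabla\widetilde V)\cdot U_0=\int\nabla\widetilde P\cdot U_0=0$. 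The term $\int\Theta_0\widetilde V\cdot\nabla\widetilde\Psi$ is killed by $\widetilde V\cdot\nabla\widetilde\Psi=0$. But the term $\int\widetilde\Psi\,\rho\nabla(|\cdot|^{-1})\cdot\widetilde V$---which survives in \emph{your} limit identity because you normalize $V$ and $\Psi$ by the \emph{same} scalar $a_n$---admits no such cancellation for a general $\rho\in C_b(\R^3)$, since $\rho\nabla(|\cdot|^{-1})$ is not a gradient. (There is also a subsidiary issue: your justification for passing to the limit in this term, via ``$\rho\nabla(|\cdot|^{-1})\in L^{3/2-\epsilon}$ paired with strong $L^q$ convergence'', does not close, because the dual exponent exceeds~$6$; Hardy gives boundedness of this term but not convergence.)

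The paper's device for removing this obstruction is to normalize $V_k$ and $\Psi_k$ \emph{separately}, by $J_k=\|\nabla V_k\|_2$ and $L_k=\|\nabla\Psi_k\|_2$, and to split into cases on the ratio $J_k/L_k$. When $L_k/J_k\to0$ (after extraction), the Hardy inequality shows that the singular term in the $V$-energy identity, divided by $J_k^2$, is $O(L_k/J_k)\to0$; the limit identity then contains only the $\nabla U_0$ term and is handled as above. When $J_k/L_k$ stays bounded, the contradiction is derived instead from the $\Psi$-energy identity divided by $L_k^2$ together with the limit $\Psi$-equation: these give $\widetilde V\cdot\nabla\widetilde\Psi=0$, which annihilates the right-hand side of that limit identity, and the gravitational term never enters. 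The preliminary observation that the $\Psi$-energy identity alone already yields $\|\nabla\Psi\|_2^2+\lambda\|\Psi\|_2^2\le C\lambda(\|V\|_2^2+1)$ is what reduces the whole problem to bounding $\|\nabla V\|_2$ and makes this separate normalization natural.
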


\begin{proof}
\noindent
\medskip
{\it Step 1.}
Multiplying third equation in~\eqref{LSS} by $\Psi$ and integrating on 
$\Omega$, after noticing that
$$\int_\Omega\nabla\cdot(\Psi(V+U_0))\Psi=0$$ because both $V$ and $U_0$ are divergence-free, we get 
\begin{equation}
\label{ener-Psi}
\int_\Omega |\nabla \Psi|^2+\frac{\lambda}{2}\int_\Omega \Psi^2
+\lambda\int_\Omega \nabla\cdot[\Theta_0(V+U_0)]\Psi=0.
\end{equation}
The latter integral on the left-hand side
can be estimated by
\begin{equation*}
\frac12\int_\Omega|\nabla \Psi|^2
+\|\Theta_0\|_\infty^2\int_\Omega |V|^2+\int_\Omega|\Theta_0U_0|^2.
\end{equation*}
The latter integral is finite because, by \eqref{uoto}, we have got 
$|\Theta_0 U_0|\in L^2(\R^3)$.
Thus, as $0\le \lambda\le1$, we get the estimate
\begin{equation}
\label{est:psi0}
\frac12\int_\Omega|\nabla\Psi|^2+\frac{\lambda}{2}\int_\Omega \Psi^2
\le \lambda\Bigl(\|\Theta_0\|_\infty^2\int_\Omega|V|^2+\int_\Omega |\Theta_0U_0|^2\Bigr).
\end{equation}
By the Poincaré inequality and by estimate \eqref{est:psi0}, we deduce that,
in order to establish~\eqref{APB}, it is sufficient to prove that
\begin{equation}
\label{APB2}
\int_\Omega |\nabla V|^2
\le C_1
\end{equation}
for some $C_1=C_1(\Omega,F,\rho,U_0,\Theta_0)>0$ independent on $\lambda\in[0,1]$.

Multiplying the first equation of system \eqref{LSS} by $V$, after some integration by parts,
we get
\begin{equation}
\label{ener-V}
\begin{split}
\int_\Omega |\nabla V|^2+\frac{\lambda}{2}\int_{\Omega}|V|^2
=&\lambda\Bigl[
\int_\Omega F_0\cdot V-\int_\Omega(V\cdot\nabla U_0)\cdot V\\
&+\int_\Omega\left((\Psi+\Theta_0)\rho\nabla |\cdot|^{-1}\right)\cdot V
{\color{blue}+\langle F, V\rangle}
\Bigr].
\end{split}
\end{equation}
{\color{blue}Here $\langle F, V\rangle$ is the duality product $H^{-1}(\Omega)$-$H^1_0(\Omega)$.} 
In the above identity, the convergence of the integral 
$\int_\Omega\left((\Psi+\Theta_0)\rho\nabla |\cdot|^{-1}\right)\cdot V$ deserves an explanation: the fact that $\Psi\rho\nabla|\cdot|^{-1}\cdot V$ and
$\Theta_0\rho\nabla|\cdot|^{-1}\cdot V$ are both integrable on $\Omega$
follows from estimates
\eqref{hardy-arg}--\eqref{HL:est} below.
The other integrals in~\eqref{ener-V} are obviously convergent because of our assumptions on $(U_0,\Theta_0)$. 

But rather than working directly with the energy inequality~\eqref{ener-V}, we proceed 
by contradiction: assume that there exist a sequence 
$(\lambda_k)\subset[0,1]$ and a sequence of solutions $(V_k,\Psi_k)\subset {\bf H}(\Omega)$ to problem~\eqref{LSS}-\eqref{LSS:B}, with $\lambda_k$ instead of $\lambda$, such that
\[
J_k:=\Big(\int_\Omega |\nabla V_k|^2\Bigr)^{1/2}\to+\infty.
\]
We also set 
\[
L_k:=\Big(\int_\Omega |\nabla \Psi_k|^2\Bigr)^{1/2}
\]
and we introduce the normalized functions
\[
\widehat V_k=\frac{V_k}{J_k}
\quad\text{and}\quad
\widehat\Psi_k=\frac{\Psi_k}{L_k}, 
\]
so that  $(\widehat V_k,\widehat \Psi_k)$ is a bounded sequence in ${\bf H}(\Omega)$.
After extracting a suitable subsequence, we can assume that
 $(\widehat V_k,\widehat \Psi_k)\to (\widetilde V,\widetilde \Psi)$
weakly in ${\bf H}(\Omega)$ and strongly in ${\bf L}^p(\Omega)$, for $p\in [2,6)$.
We can also assume that $\lambda_k\to\lambda_0$, for some $\lambda_0\in[0,1]$.

{\it Step 2. Excluding the case: $\limsup_{k\to+\infty}J_k/L_k<\infty$.} 

If by contradiction, $\limsup_{k\to+\infty}J_k/L_k<\infty$, then
after a new extraction of a subsequence, we can assume that
there exists $\gamma\ge0$ such that
\[
J_k/L_k\to\gamma.
\]
In fact, $\gamma>0$ by estimate~\eqref{est:psi0}.
Moreover, 
as $J_k\to+\infty$, we must have $L_k\to+\infty$. 
Equation~\eqref{ener-Psi} holds with $(V_k,\Psi_k)$ instead of~$(V,\Psi)$, namely
\[
\label{ener-Psik}
\int_\Omega |\nabla \Psi_k|^2+\frac{\lambda_k}{2}\int_\Omega |\Psi_k|^2
+\lambda_k\int_\Omega \nabla\cdot[\Theta_0(V_k+U_0)]\Psi_k=0.
\]
After dividing term-by-term by $L_k^2$ and taking the limit as $k\to+\infty$, 
using that $\widehat \Psi_k\to\widetilde \Psi$ weakly in $H^1_0(\Omega)$, $\widehat V_k\to\widetilde V$ strongly in $L^2(\Omega)^3$ and $\Theta_0\in L^\infty(\Omega)$ as well as the fact that $\Theta_0U_0\in L^2(\R^3)$, implies
\[
\frac{1}{L_k^2}\bigl|\int_\Omega \nabla\cdot (\Theta_0U_0)\Psi_k\bigr|
\le \frac{C}{L_k}\|\Theta_0U_0\|_{L^2(\Omega)}\to0.
\]
Hence, we get the equation 
\begin{equation}
\label{exl}
1+\frac{\lambda_0}{2}\int_\Omega |\widetilde \Psi|^2
=-\lambda_0\gamma\int_\Omega\nabla\cdot(\Theta_0\widetilde V)\,\widetilde\Psi.
\end{equation}
The weak formulation of the third equation of~\eqref{LSS}, written for $(V_k,\Psi_k)$ 
and $\lambda_k$,
gives, for all $\chi\in C^\infty_0 (\Omega)$,
\[
\int_\Omega \nabla\Psi_k \cdot\nabla \chi=
\lambda_k\Bigl[\int_\Omega[\Psi_k+x\cdot\nabla\Psi_k]\chi
-\int_\Omega\nabla\cdot[(\Psi_k+\Theta_0)(V_k+U_0)]\chi\Bigr].
\]
Let us divide this identity by $L_k^2$ and let $k\to+\infty$. All the terms $\frac{1}{L_k^2}\int_\Omega$ which are  linear with respect to  $\{V_k,\Psi_k\}$ tend to zero. Hence, in the limit,  we find
\[
\lambda_0\gamma\int_\Omega \nabla\cdot(\widetilde\Psi\widetilde V) \chi=0
\qquad
\text{for all $\chi\in C^\infty_0 (\Omega)$}.
\]
But $\lambda_0\gamma\not=0$ by equation~\eqref{exl}, so
\[
\int_\Omega \nabla\cdot (\widetilde\Psi\widetilde V)\chi=0
\qquad
\text{for all $\chi\in C^\infty_0 (\Omega)$}.
\]
This in turn implies that
\[
\widetilde V\cdot\nabla \widetilde\Psi=0.
\]
But then,
\[
\int_\Omega\nabla\cdot(\Theta_0\widetilde V)\widetilde\Psi
=-\int_\Omega\Theta_0\widetilde V\cdot\nabla\Psi=0
\]
which contradicts equation~\eqref{exl}.
This excludes that $\limsup_{k\to+\infty}J_k/L_k<\infty$.

{\it Step 3.} 
We reduced ourselves to the case $\limsup_{k\to+\infty} J_k/L_k=+\infty$.
After extracting a new subsequence, we can assume that $L_k/J_k\to0$.
Equation~$\eqref{ener-V}$ holds true for $(V_k,\Psi_k)$ and $\lambda_k$  instead of $(V,\Psi)$ and $\lambda$.
Let us divide it by $J_k^2$ and study the limit of each term, as $k\to+\infty$.
We have
\[
\frac{1}{J_k^2}\int_\Omega |\nabla V_k|^2=1,
\qquad
\frac{1}{J_k^2}\int_\Omega|V_k|^2\to\int_\Omega|\widetilde V|^2,
\qquad
\frac{1}{J_k^2}\int_\Omega F_0\cdot  V_k\to0,
\]
and
\[
{\color{blue}\frac{1}{J_k^2}\langle F, V_k\rangle \to0},
\qquad
\frac{1}{J_k^2}\int_\Omega(V_k\cdot\nabla U_0)\cdot  V_k
\to
\int_\Omega(\widetilde V\cdot\nabla U_0)\cdot \widetilde V,
\]
because  $|\nabla U_0|\in L^2(\Omega)$ and $\widehat V_k\to\widetilde V$ strongly in $L^p(\Omega)^3$ for $p\in [2,6)$.
For the next term, we rely on the Hardy inequality: as $\Psi_k$ and $V_k$ belong to $H^1_0(\Omega)$ we can write


\begin{equation}
\label{hardy-arg}
\begin{split}
\frac{1}{J_k^2}
\Bigl|\int_\Omega \Psi_k\rho\nabla\bigl({|\cdot|^{-1}}\bigr)\cdot V_k\Bigr|
&\le 
 \frac{C}{J_k^2}\,\Bigl\| \frac{\Psi_k}{|\cdot|}\Bigr\|_{L^2(\Omega)}
 \Bigl\| \frac{V_k}{|\cdot|}\Bigr\|_{L^2(\Omega)}\\
&\le 
\frac{C}{J_k^2}\|\nabla\Psi_k\|_{L^2(\Omega)}\|\nabla V_k\|_{L^2(\Omega)}\\
&\le \frac{C}{J_k}\|\nabla \Psi_k\|_{L^2(\Omega)}
=C\frac{L_k}{J_k}\to0.
\end{split}
\end{equation}
The function $\Theta_0$ does not belong to $H^1_0(\Omega)$.
However, it follows from~\eqref{uoto} that $\Theta_0/|\cdot|\in L^2(\R^3)$.
Therefore, the term of \eqref{ener-V} containing $\Theta_0$
can be estimated as in~\eqref{hardy-arg}.
Namely,
\begin{equation}\label{HL:est}
\begin{split}
\frac{1}{J_k^2}
\Bigl|\int_\Omega \Theta_0\rho\nabla\bigl({|\cdot|^{-1}}\bigr)\cdot V_k\Bigr|
&\le 
 \frac{C}{J_k^2}\,\Bigl\| \frac{\Theta_0}{|\cdot|}\Bigr\|_{L^2(\Omega)}
 \Bigl\| \frac{V_k}{|\cdot|}\Bigr\|_{L^2(\Omega)}\\
&\le 
\frac{C}{J_k^2}\|\nabla V_k\|_{L^{2}(\Omega)}
= \frac{C}{J_k}\to0.
\end{split}
\end{equation}
%
The above calculations lead to the identity
\begin{equation}
\label{idela}
1+\frac{\lambda_0}{2}\int_\Omega |\widetilde V|^2
=-\lambda_0\int_\Omega(\widetilde V\cdot\nabla U_0)\cdot\widetilde V,
\end{equation}
which implies  $\lambda_0\not=0$.
So, for large enough $k$, we have $\lambda_k\not=0$
and we can normalize the pressure putting
\[
\widehat{P_k}:=\frac{P_k}{\lambda_k J_k^2}.
\]
We now go back to  first equation in~\eqref{LSS}, written for $(V_k,\Psi_k)$ and  
$\lambda_k$ instead of $(V,\Psi)$ and $\lambda$.
Dividing by $\lambda_kJ_k^2$ we obtain
\[\label{Hardy}
\begin{split}
\widehat V_k\cdot\nabla\widehat V_k+\nabla\widehat P_k
=&\frac{1}{J_k}\Biggl(\frac{\Delta\widehat V_k}{\lambda_k}+\widehat V_k+x\cdot \nabla \widehat V_k+\frac{F_0}{J_k}\\
&\qquad-U_0\cdot\nabla\widehat V_k-\widehat V_k\cdot\nabla U_0+\Bigl(\frac{L_k}{J_k}\widehat\Psi_k+\frac{\Theta_0}{J_k}\Bigr)\rho\nabla(|\cdot|^{-1})+\frac{F}{J_k}\Biggr).
\end{split}
\]
More precisely, we consider the weak formulation of the above equation: testing with an arbitrary solenoidal vector field $\eta\in C^\infty_{0,\sigma}(\R^3)^3$, after integrating on 
$\Omega$ and letting $k\to+\infty$, the terms obtained in the right-hand side 
vanish, in the limit.
{\color{blue}
Indeed, for the two last terms, 
we obviously have
\[
\frac{\langle F,\eta\rangle}{J_k^2}\to0
\]
and, following the calculations in \eqref{HL:est}, 
\[
\frac{1}{J_k^2}\int_\Omega  \Theta_0\rho\nabla(|\cdot|^{-1})\cdot\eta\to0.
\]
}
Moreover,  we also have
\[
\frac{L_k}{J_k^2}\int\widehat\Psi_k\rho\nabla(|\cdot|^{-1})\cdot \eta\to0
\]
because $L_k/J_k^2\to0$, $\rho$ is a bounded function,
and $|\int_\Omega \widehat\Psi_k\nabla(|\cdot|^{-1})\cdot \eta|$ can be bounded uniformly with respect to~$k$ applying the Hardy inequality as in~\eqref{hardy-arg}.
The terms obtained testing with $\eta$ the other terms on the right-hand side also vanish (because $\widehat V_k$ is bounded in $H^1_0(\Omega)^3$ and $|U_0|$ and $|\nabla U_0|$ are both in $L^\infty(\Omega)$, and because $J_k\to+\infty)$.
But $\int_\Omega\nabla \widehat P_k\cdot \eta=0$, therefore, we find in the limit
\begin{equation}
 \label{seuler}
 \int_\Omega (\widetilde V\cdot\nabla \widetilde V)\cdot\eta=0,
 \qquad\text{for all $\eta\in C^\infty_{0,\sigma}(\R^3)^3$.}
\end{equation}
This means that $\widetilde V\in H^1_0(\Omega)$ is a stationary solution of the Euler equations.
At this stage, the proof can be finished exactly as in the paper by Korobkov and Tsai~\cite{KT16}:
there exists $\widetilde P\in L^3(\Omega)$, such that $\|\nabla \widetilde P\|_{L^{3/2}(\Omega)}<\infty$, satisfying
\[
\begin{cases}
\widetilde V\cdot\nabla \widetilde V=-\nabla \widetilde P &\text{in $\Omega$}\\
\nabla\cdot \widetilde V=0 &\text{in $\Omega$}\\
\widetilde V=0& \text{on $\partial \Omega$}.
\end{cases} 
\]
Then, going back to~\eqref{idela}, and using once more that $U_0,\nabla U_0$ are in $L^\infty(\R^3)$, we find
\[
\begin{split}
1+\frac{\lambda_0}{2}\int_\Omega|\widetilde V|^2
&=-\lambda_0\int_\Omega(\widetilde V\cdot\nabla U_0)\cdot\widetilde V
=\lambda_0\int_\Omega(\widetilde V\cdot \nabla \widetilde V)\cdot U_0\\
&=-\lambda_0\int_\Omega\nabla\widetilde P\cdot U_0
=-\lambda_0\int_\Omega\nabla\cdot(PU_0)\\
&=0.
\end{split}
\]
The last equality relies on a classical result on the stationary Euler equation~\cite{KP83}*{Lemma 4} (see also~\cite{AM84}*{Theorem~2.2}), implying that in addition to the above properties, the pressure $P$ can be taken additionally such that $P(x)\equiv0$ a.e. on $\partial\Omega$, with respect to the two-dimensional Hausdorff measure.
From the last equality we get a contradiction.
\end{proof}

\subsection{Existence of solutions to the perturbed elliptic system in bounded domains}
Let $\Omega$ be a bounded domain with a smooth boundary.
As before, we take $\rho\in C_b(\R^3)$, but here we additionally
assume that the support of $\rho$ does not contain the origin, in a such way that 
$\rho\nabla(|\cdot|^{-1})\in L^2(\R^n)\cap L^\infty(\R^n)$.

Let us define the linear map $L_\rho$ and the nonlinear map $N$, 
\[
\begin{split}
L_\rho(V,\Psi)\!:=&
\!\Bigl(V+x\cdot \nabla V-U_0\cdot\nabla V-V\cdot\nabla U_0+\Psi\rho\nabla(\textstyle{|\cdot|^{-1}}), \\
& \qquad \qquad  \qquad \qquad  \qquad \qquad  \qquad \qquad \Psi+x\cdot\nabla\Psi-\nabla\cdot\bigl(\Psi U_0 + \Theta_0V\bigr)
\Bigr)\\
N(V,\Psi)
\!:=&\!\Bigl(-U_0\cdot\nabla U_0-V\cdot\nabla V+\Theta_0\rho\nabla(|\cdot|^{-1})\, ,
      -\nabla\cdot(\Psi V+\Theta_0 U_0)\Bigr).\\
\end{split}
\]
We also introduce the following nonlinear map
\begin{equation}
\begin{split}
G_\rho(V,\Psi):=
L_\rho(V,\Psi)+N(V,\Psi).
 \end{split}
\end{equation}
In this way, our system~\eqref{LSS}, in the case $\lambda=1$, can be rewritten as
\begin{equation}
\label{eq:G}
(-\Delta V+\nabla P-F,-\Delta\Psi)=G_\rho(V,\Psi).
\end{equation}

We endow the dual space ${\bf H}(\Omega)'$ with the usual norm of dual Banach spaces.

\begin{lemma}
\label{lem:com}
Let $\Omega$ be a bounded domain with a smooth boundary and
$\rho\in C_b(\R^3)$, such that $0\not\in {\rm supp}(\rho)$.
The nonlinear map $G_\rho$ is well defined as a map
$G_\rho\colon {\bf H}(\Omega)\to {\bf L}^{3/2}(\Omega)$
and is compact as a map
$G_\rho\colon {\bf H}(\Omega)\to {\bf H}(\Omega)'$.
 \qquad
\end{lemma}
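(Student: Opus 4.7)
The plan is to factor the map $G_\rho$ through the intermediate space $\mathbf{L}^{3/2}(\Omega)$. In three dimensions, the Sobolev embedding yields $\mathbf{H}(\Omega)\hookrightarrow \mathbf{L}^3(\Omega)$, and by Rellich--Kondrachov this embedding is compact since $3<2^*=6$. Dualizing through the standard pairing produces a compact embedding $\mathbf{L}^{3/2}(\Omega)\hookrightarrow \mathbf{H}(\Omega)'$: this is simply the adjoint of the Rellich embedding above, and the adjoint of a compact operator is compact. Consequently, if we verify that $G_\rho$ is well defined and continuous (in fact bounded on bounded sets) as a map $\mathbf{H}(\Omega)\to \mathbf{L}^{3/2}(\Omega)$, then composing with this compact embedding will yield both assertions of the lemma.

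Next I would go term by term through $L_\rho(V,\Psi)$ and $N(V,\Psi)$. The linear terms are all handled by pairing an $H^1_0$ function against an $L^\infty$ coefficient on the bounded domain $\Omega$: indeed $|x|$ is bounded on $\Omega$; by Proposition~\ref{prop:heat} the profiles $U_0$, $\nabla U_0$, $\Theta_0$, $\nabla\Theta_0$ all lie in $L^\infty(\R^3)$; and $\rho\nabla(|\cdot|^{-1})\in L^\infty(\R^3)$ because $0\notin\mathrm{supp}(\rho)$. Thus contributions like $U_0\cdot\nabla V$, $V\cdot\nabla U_0$, $\Psi\,\rho\nabla(|\cdot|^{-1})$ and, using $\nabla\cdot U_0=\nabla\cdot V=0$, the divergence terms $\nabla\cdot(\Psi U_0)=U_0\cdot\nabla\Psi$ and $\nabla\cdot(\Theta_0 V)=V\cdot\nabla\Theta_0$, all belong to $L^2(\Omega)\hookrightarrow L^{3/2}(\Omega)$ with norm dominated linearly by $\|(V,\Psi)\|_{\mathbf{H}}$. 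The constant source terms $U_0\cdot\nabla U_0$, $\Theta_0\,\rho\nabla(|\cdot|^{-1})$ and $\nabla\cdot(\Theta_0 U_0)=U_0\cdot\nabla\Theta_0$ are bounded functions on $\R^3$, hence in $L^{3/2}(\Omega)$. The genuinely nonlinear terms are $V\cdot\nabla V$ and, again via $\nabla\cdot V=0$, $\nabla\cdot(\Psi V)=V\cdot\nabla\Psi$; by Sobolev $H^1_0(\Omega)\hookrightarrow L^6(\Omega)$ and the Hölder relation $\tfrac{1}{6}+\tfrac{1}{2}=\tfrac{2}{3}$, both products lie in $L^{3/2}(\Omega)$, with norms controlled by $\|V\|_{H^1}^2$ and $\|V\|_{H^1}\|\Psi\|_{H^1}$ respectively.

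These estimates show boundedness on bounded sets, and continuity then follows from the affine/bilinear structure of each summand together with the same product inequalities (writing a difference $G_\rho(V,\Psi)-G_\rho(V',\Psi')$ and applying Hölder as above). Composing with the compact embedding $\mathbf{L}^{3/2}(\Omega)\hookrightarrow \mathbf{H}(\Omega)'$ finishes the proof. There is no real obstacle here: the only point one must be careful about is identifying the correct Lebesgue exponent $3/2$, dictated by the worst term $V\cdot\nabla V$ and matched exactly by the compactness threshold $3<6$ available in three dimensions; all other terms are strictly easier.
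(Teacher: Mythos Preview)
Your argument is correct. The paper follows the same outline---verify term by term that $G_\rho$ maps $\mathbf{H}(\Omega)$ continuously into $\mathbf{L}^{3/2}(\Omega)$, with the exponent $3/2$ forced by the quadratic terms $V\cdot\nabla V$ and $V\cdot\nabla\Psi$---but handles the compactness step differently. Where you invoke Schauder's theorem to note that $\mathbf{L}^{3/2}(\Omega)\hookrightarrow\mathbf{H}(\Omega)'$ is compact (as the adjoint of the Rellich embedding $\mathbf{H}(\Omega)\hookrightarrow\mathbf{L}^3(\Omega)$) and then simply compose with the continuous, bounded-on-bounded-sets map $G_\rho$, the paper argues directly with sequences: it takes a bounded sequence $(V_k,\Psi_k)$ in $\mathbf{H}(\Omega)$, extracts a subsequence converging weakly in $\mathbf{H}(\Omega)$ and strongly in $\mathbf{L}^3(\Omega)$, and then estimates $\|G_\rho(V_k,\Psi_k)-G_\rho(\widetilde V,\widetilde\Psi)\|_{\mathbf{H}(\Omega)'}$ by hand via the duality pairing, treating the linear part $L_\rho$ and the nonlinear part $N$ separately. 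Your factorization is a little cleaner and avoids the explicit sequence computation; the paper's hands-on version makes no appeal to the adjoint-of-compact-is-compact fact. Both routes rest on exactly the same term-by-term estimates.
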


\begin{proof}
Notice that $G_\rho\colon {\bf H}(\Omega)\to {\bf L}^{3/2}(\Omega)$  is well defined.
Indeed, if
$(V,\Psi)\in {\bf H}(\Omega)\subset {\bf L}^6(\Omega)$,  
then the components of $V\cdot \nabla V$ and $\nabla \cdot (\Psi V)$ do belong to $L^{3/2}(\Omega)$.
In the same way, using the conditions~\eqref{uoto} on $U_0$ and $\Theta_0$, one easily checks that all the other terms defining $G_\rho(V,\Psi)$ belong also to $L^{3/2}(\Omega)$ (or even to a smaller space). The presence of the function $\rho$ cutting out the singularity of $\nabla(|\cdot|^{-1})$ near the origin is important here.

As the Sobolev embedding ${\bf H}(\Omega)\subset {\bf L}^6(\Omega)$ is continuous,
by the previous considerations
the map $G_\rho\colon {\bf H}(\Omega)\to {\bf L}^{3/2}(\Omega)$ is continuous.
Moreover, every function $f\in {\bf L}^{3/2}(\Omega)$ can be identified to an element of ${\bf H}(\Omega)'$
through the usual duality $h\mapsto \int_{\Omega} f\cdot h$, where $h\in {\bf H}(\Omega)$.
Adopting this identification, it results that the map 
$G_\rho\colon {\bf H}(\Omega)\to {\bf H}(\Omega)'$ is continuous.

Let us prove that, in fact, $G_\rho\colon {\bf H}(\Omega)\to {\bf H}'(\Omega)$ is
a compact operator.
For every $(V,\Psi), (\widetilde V,\widetilde \Psi)\in {\bf H}(\Omega)$, 
denoting
\[
v:=\widetilde V-V,\quad\text{and}\quad \psi:=\widetilde\Psi-\Psi,
\]
we have,
\[
\begin{split}
N(\widetilde V,\widetilde \Psi)-N(V,\Psi)
=\Bigl(-(v+V)\cdot\nabla v-v\cdot\nabla V, -\nabla\cdot((\psi+\Psi)v+\psi V)\Bigr).
\end{split}
\]
Now, if $(V_k,\Psi_k)$ is a bounded sequence in ${\bf H}(\Omega)$, 
with $\|(V_k,\Psi_k)\|_{{\bf H}(\Omega)}\le R$,
then there exists $(\widetilde V,\widetilde\Psi)\in {\bf H}(\Omega)$ such that, after extraction of a subsequence,  
$$
\text{
$(v_k,\psi_k):=(\widetilde V-V_k,\widetilde \Psi-\Psi_k)\to0$
weakly in ${\bf H}(\Omega)$ and strongly in ${\bf L}^3(\Omega)$.}
$$
For any $\Phi\in {\bf H}(\Omega)$, we have, after some integration by parts,
\[
\Bigl|
\int_{\Omega} N(\widetilde V,\widetilde\Psi)\cdot \Phi-\int_{\Omega}N(V_k,\Psi_k)\cdot \Phi\Bigr|
\le  CR\bigl(\|v_k\|_3+\|\psi_k\|_3\bigr) \|\Phi\|_{{\bf H}(\Omega)}
\]
for some constant $C>0$ independent on~$k$ and $\Phi$.
It follows that
\[
\begin{split}
\|N(\widetilde V,\widetilde \Psi)-N(V_k,\Psi_k)\|_{{\bf H}'(\Omega)}
&:=\sup_{\|\Phi\|_{{\bf H}(\Omega)}=1} \,
 \Bigl|
\int_{\Omega} [N(\widetilde V,\widetilde\Psi)-N(V_k,\Psi_k)]\cdot \Phi\Bigr|\\
&\le CR \bigl(\|v_k\|_3+\|\psi_k\|_3\bigr)\to0.
\end{split}
\]
For the linear terms we have also,
\[
\|L_\rho(V_k,\Psi_k)-L_\rho(\widetilde V,\widetilde\Psi)\|_{{\bf H}(\Omega)'}
=\| L_\rho(v_k,\psi_k)\|_{{\bf H}(\Omega)'}\to0,
\]
as one can check using conditions~\eqref{uoto}.
Here the condition $0\not\in{\rm supp}(\rho)$ is useful to prove that
$\|\psi_k\,\rho\nabla(|\cdot|^{-1})\|_{H^1(\Omega)'}\to0$,
which is part of the previous claim;
no other difficulty arises for the other terms of $L_\rho(v_k,\psi_k)$.

Hence,
\[
G_\rho(V_k,\Psi_k) \to G_\rho(\widetilde V,\widetilde\Psi)
\qquad\text{strongly in ${\bf H}(\Omega)'$.}
\]
\end{proof}

\begin{proposition}
\label{prop:exi-rho}
Let $\Omega$ be a bounded domain with a smooth boundary and $\rho\in C^\infty_b(\R^3)$,
such that $0\not\in{\rm supp}(\rho)$.
Let $(U_0,\Theta_0)$ be as in~\eqref{uoto} and assume that {\color{blue} $F\in H^{-1}(\Omega)^3$}.
Then the system
\begin{equation}
\label{SSr}
\begin{split}
-\Delta V+\nabla P
&=V+x\cdot\nabla V+F_0+F_1(V)+(\Psi+\Theta_0)\,\rho\nabla(|\cdot|^{-1})+F\\
\nabla\cdot V&=0,\\
-\Delta\Psi
&=\Psi+x\cdot\nabla\Psi-\nabla\cdot\bigl((\Psi+\Theta_0)(V+U_0)\bigr)
\end{split}
\qquad x\in\Omega,
\end{equation}
supplemented with the Dirichlet boundary conditions
\begin{equation}\label{SSr:D}
    V=0, \qquad \Psi=0\qquad \text{on} \qquad \partial\Omega,
\end{equation}
has a solution $(V,\Psi) \in {\bf H}(\Omega)$.
\end{proposition}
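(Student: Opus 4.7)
The plan is to recast problem \eqref{SSr}-\eqref{SSr:D} as a fixed-point equation on ${\bf H}(\Omega)$ and invoke the Leray--Schauder theorem. First, I would introduce the solution operator $S\colon {\bf H}(\Omega)'\to {\bf H}(\Omega)$ for the decoupled linear problem consisting of the homogeneous-Dirichlet Stokes system for the velocity part and the homogeneous-Dirichlet Laplace problem for the scalar part: given $h=(h_1,h_2)\in{\bf H}(\Omega)'$, the pair $S(h)=(V,\Psi)$ is characterized by
$$
\int_\Omega\nabla V:\nabla\Phi_1+\int_\Omega\nabla\Psi\cdot\nabla\Phi_2=\langle h,\Phi\rangle
\qquad\text{for every }\Phi=(\Phi_1,\Phi_2)\in {\bf H}(\Omega).
$$
By Lax--Milgram, $S$ is a bounded linear isomorphism. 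The nonlinear map
$$
T(V,\Psi):=S\bigl(G_\rho(V,\Psi)+(F,0)\bigr)
$$
is then a well-defined self-map of ${\bf H}(\Omega)$. Any fixed point $(V,\Psi)=T(V,\Psi)$ gives a weak solution of \eqref{SSr} when tested against divergence-free, zero-trace fields; the pressure $P$ is then recovered from the first equation by the classical De Rham lemma.

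Next, I would verify the two hypotheses of the Leray--Schauder theorem. Compactness of $T$ is immediate: by Lemma \ref{lem:com}, $G_\rho\colon {\bf H}(\Omega)\to{\bf H}(\Omega)'$ is compact, and composing with the continuous operator $S$ preserves compactness; adding the constant term $(F,0)$ does not affect this. For the uniform a priori bound along the homotopy, I would observe that the equation $(V,\Psi)=\lambda T(V,\Psi)$ is exactly the weak form of the perturbed system \eqref{LSS}-\eqref{LSS:B} at parameter $\lambda$, so Proposition \ref{prop:1ae} supplies the required estimate $\|(V,\Psi)\|_{{\bf H}(\Omega)}\le\sqrt{C_0}$, uniform in $\lambda\in[0,1]$.

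With these two ingredients in hand, the Leray--Schauder theorem guarantees the existence of a fixed point of $T$, which is the sought-after weak solution in ${\bf H}(\Omega)$. Because the hard analytical work has already been carried out in Proposition \ref{prop:1ae} (the contradiction argument excluding blow-up of $J_k$, and in particular the reduction to a stationary Euler solution on $\Omega$) and in Lemma \ref{lem:com} (the compactness of $G_\rho$, which exploits both the Sobolev embedding ${\bf H}(\Omega)\subset{\bf L}^6(\Omega)$ and the assumption $0\notin{\rm supp}(\rho)$ that kills the singularity of $\nabla(|\cdot|^{-1})$), I do not expect any substantial obstacle at this stage: the only point requiring some care is to check that the fixed-point formulation correctly encodes the pressure-eliminated weak form of \eqref{SSr} and that the uniform bound of Proposition \ref{prop:1ae} applies verbatim to every fixed point of $\lambda T$.
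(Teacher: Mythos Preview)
Your proposal is correct and follows essentially the same approach as the paper: the paper also introduces the Riesz isomorphism $T\colon {\bf H}(\Omega)'\to {\bf H}(\Omega)$ (your $S$), rewrites the weak formulation as the fixed-point equation $(V,\Psi)=T(G_\rho(V,\Psi))+T((F,0))$, invokes Lemma~\ref{lem:com} for compactness, Proposition~\ref{prop:1ae} for the uniform bound along the homotopy $(V,\Psi)=\lambda T(G_\rho(V,\Psi))+\lambda T((F,0))$, and concludes via Leray--Schauder. The only cosmetic difference is that you phrase the construction of $S$ via Lax--Milgram and mention De Rham for the pressure, but the argument is otherwise identical.
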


\begin{proof}

Let $T\colon {\bf H}(\Omega)'\to {\bf H}(\Omega)$ be the isomorphism given by the Riesz representation theorem for Hilbert spaces, where the Hilbert space ${\bf H}(\Omega)$
is endowed with the scalar product
\[
\bigl((V,\Psi),(V',\Psi')\bigr)\mapsto \int_{\Omega}\nabla V\cdot\nabla V'
+\int_\Omega \nabla\Psi\cdot\nabla\Psi'.
\]
{\color{blue} By assumption on $F$, we have $(F,0)\in {\bf H}(\Omega)'$ (the fourth component is zero because we considered no forcing term in the equation of the temperature).}
The weak formulation of equation~\eqref{eq:G} reads
\begin{equation}
\label{WCT}
(V,\Psi)=T(G_\rho(V,\Psi))+T((F,0)).
\end{equation}

By Lemma~\ref{lem:com}, 
the nonlinear map $T\circ G_\rho\colon {\bf H}(\Omega)\to {\bf H}(\Omega)$ is compact. Hence, the map  $(V,\Psi)\mapsto T(G_\rho(V,\Psi))+T((F,0))$ is compact on ${\bf H}(\Omega)$.

For every $\lambda\in[0,1]$, if $(V,\Psi)=\lambda (T\circ G_{\rho})(V,\Psi)+\lambda T((F,0))$,
{\it i.e.}, if $(V,\Psi)$ is a solution of~\eqref{LSS}, 
then $\|(V,\Psi)\|_{{\bf H}(\Omega)}\le C_0$, where $C_0$ is the constant, independent
on $\lambda$, obtained in Proposition~\ref{prop:1ae}.
The Leray-Schauder fixed-point theorem (see~{\it e.g.}~\cite{LR16}*{p.529})
then implies that the map $(V,\Psi)\mapsto T(G_\rho(V,\Psi))+T((F,0))$ has a fixed point 
$(V_\rho,\Psi_\rho)\in {\bf H}(\Omega)$, such that 
$\|(V_\rho,\Psi_\rho)\|_{{\bf H}(\Omega)}\le C_0$.
\end{proof}

\subsection{Existence of solutions of the perturbed elliptic system in the whole space}

In this subsection we choose, once and for all, a cut-off function
$\rho\in C^\infty_b(\R^3)$ such that, $\rho(x)=0$ if $|x|\le 1/2$, $0\le \rho(x)\le1$ if
$1/2\le |x|\le 1$ and  $\rho(x)=1$ if $|x|\ge1$.
Then we set, for $x\in\R^3$,
\[
\rho_k(x):=\rho(kx),
\]
so that $0\le \rho_k(x)\le 1$ and $\rho_k\to1$ a.e. in $\R^3$ as $k\to+\infty$.

\begin{proposition}
\label{prop:invading}
Let $(U_0,\Theta_0)$ be as in~\eqref{uoto} and {\color{blue}$F\in H^{-1}(\R^n)^3$}.
Let $k\in\N$, $k\ge1$ and $B_k$ the open
ball of $\R^3$ centered at the origin and of radius~$k$.
Let $(V_k,\Psi_k)\in {\bf H}(B_k)$ be a solution
of problem~\eqref{SSr}-\eqref{SSr:D} with $\Omega=B_k$ and $\rho=\rho_k$.
Then there exists a constant $C_1=C_1(F,U_0,\Theta_0)>0$, independent on~$k$,
such that
\begin{equation}
\label{APBi}
\int_{B_k}\Bigl(|V_k|^2+\Psi_k^2+|\nabla V_k|^2+|\nabla \Psi_k|^2\Bigr)
\le C_1. 
\end{equation}
\end{proposition}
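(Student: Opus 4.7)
The plan is to adapt the contradiction argument from Proposition~\ref{prop:1ae}, now making sure that every constant is independent of the radius~$k$. Since $V_k,\Psi_k\in H^1_0(B_k)$, their zero extensions belong to $H^1(\R^3)$, so the whole analysis may be performed in a fixed ambient space. The cut-off $\rho_k$ obeys $0\le\rho_k\le 1$ uniformly and $\rho_k\nabla(|\cdot|^{-1})$ is pointwise dominated by $\nabla(|\cdot|^{-1})$; together with the fixed data $F\in H^{-1}(\R^3)^3$, $\Theta_0 U_0\in L^2(\R^3)$, the universal Hardy constant on $\R^3$, and the global bounds $\|\Theta_0\|_\infty$ and $\|\nabla U_0\|_\infty$ furnished by Proposition~\ref{prop:heat}, this guarantees that every a priori bound used in the proof of Proposition~\ref{prop:1ae} can be stated with a $k$-independent constant.

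Concretely, suppose by contradiction that, after extraction, $J_k:=\|\nabla V_k\|_2\to+\infty$. Set $L_k:=\|\nabla\Psi_k\|_2$ and normalise $\widehat V_k:=V_k/J_k$, $\widehat\Psi_k:=\Psi_k/L_k$. Their zero extensions form a bounded sequence in $H^1(\R^3)^4$ and, after a further extraction, converge weakly in $H^1(\R^3)^4$ and strongly in $L^p_{\rm loc}(\R^3)^4$ for $p\in[2,6)$, to some $(\widetilde V,\widetilde\Psi)\in H^1(\R^3)^4$ with $\nabla\cdot\widetilde V=0$. Steps~1 and~2 of the proof of Proposition~\ref{prop:1ae} then carry over verbatim, because every estimate used there (notably the analogues of~\eqref{hardy-arg} and~\eqref{HL:est}) only involves the $k$-independent quantities listed above. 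In particular, Step~2 rules out the regime $\limsup_k J_k/L_k<\infty$, so one may assume $L_k/J_k\to0$ and proceed to the analogue of Step~3.

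The main obstacle is the adaptation of Step~3. In the bounded-domain setting, the proof produced a stationary Euler solution $\widetilde V\in H^1_0(\Omega)$ and invoked the Kapitanski--Pileckas normalisation of the Euler pressure~\cite{KP83} to obtain $\int_\Omega\nabla\widetilde P\cdot U_0=0$, contradicting identity~\eqref{idela}. Here the limit $\widetilde V\in H^1(\R^3)^3$ is instead a stationary weak Euler solution on all of $\R^3$; the associated pressure, defined via $-\Delta\widetilde P=\partial_i\partial_j(\widetilde V_i\widetilde V_j)$ and Calder\'on--Zygmund, lies in $L^3(\R^3)$ thanks to $\widetilde V\otimes\widetilde V\in L^3(\R^3)$. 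To replace the boundary argument, the plan is to prove directly that
\[
\int_{\R^3}\nabla\widetilde P\cdot U_0=0,
\]
by truncating with a radial cut-off $\chi_R$ equal to $1$ on $B_R$ and vanishing outside $B_{2R}$, and integrating by parts against $U_0$ (which is divergence-free). The resulting flux term $-\int\widetilde P\,U_0\cdot\nabla\chi_R$ is controlled by H\"older's inequality using $|U_0(x)|\le C/|x|$ for $|x|\ge 1$, $|\nabla\chi_R|\le C/R$, and $\|\widetilde P\|_{L^3(R\le|x|\le 2R)}\to0$ as $R\to+\infty$, giving a decay of order $\|\widetilde P\|_{L^3(R\le|x|\le 2R)}$ that vanishes in the limit. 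Combining this identity with the $\R^3$-analogue of~\eqref{idela}, justified since $\widetilde V\in L^2\cap L^6$ and $\nabla U_0\in L^\infty$ make every integration by parts legitimate, yields the contradiction $1=0$, whence estimate~\eqref{APBi}.
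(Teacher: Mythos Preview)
Your overall strategy is sound, but there is a real gap in the normalization. You set $J_k:=\|\nabla V_k\|_2$ and then claim the zero extensions of $(\widehat V_k,\widehat\Psi_k)$ form a bounded sequence in $H^1(\R^3)^4$. On a fixed bounded domain this would follow from Poincar\'e, but the Poincar\'e constant on $B_k$ blows up with~$k$, so $\|\widehat V_k\|_{L^2}$ is not controlled. With your $J_k$ you cannot even exclude the scenario where $\|\nabla V_k\|_2$ stays bounded while $\|V_k\|_2\to\infty$. The paper fixes this by redefining
\[
J_k^2:=\int_{B_k}\Bigl(\tfrac12|V_k|^2+|\nabla V_k|^2\Bigr),
\qquad
L_k^2:=\int_{B_k}\Bigl(\tfrac12|\Psi_k|^2+|\nabla\Psi_k|^2\Bigr),
\]
so that the normalized sequences are bounded in $H^1(\R^3)$ by construction and the energy identity~\eqref{jk2} reads exactly $J_k^2=\text{RHS}$. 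Also, ``Steps~1 and~2 carry over verbatim'' is an overstatement: the compact embeddings now give only $L^p_{\rm loc}$ convergence, so explicit tail estimates (as in the paper's computation~\eqref{calula}, exploiting $\nabla U_0,\nabla\Theta_0\in L^4(\R^3)$) are needed to pass to the limit in the quadratic terms.

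Your Step~3 takes a genuinely different route from the paper. You recover the Euler pressure $\widetilde P\in L^3(\R^3)$ via Calder\'on--Zygmund and kill the flux term using the decay of $U_0$ and of $\widetilde P$; this works, since $\nabla\widetilde P\cdot U_0=-(\widetilde V\cdot\nabla\widetilde V)\cdot U_0\in L^1(\R^3)$ (bounded $U_0$ near the origin, Hardy's inequality for the far region) and the boundary term is bounded by $C\|\widetilde P\|_{L^3(R\le|x|\le 2R)}\to0$. The paper instead bypasses the pressure entirely: from $\int_{\R^3}(\widetilde V\cdot\nabla\widetilde V)\cdot\eta=0$ for all $\eta\in C^\infty_{0,\sigma}(\R^3)^3$, it approximates $U_0$ in $L^4_\sigma(\R^3)$ by such test functions to obtain $\int_{\R^3}(\widetilde V\cdot\nabla\widetilde V)\cdot U_0=0$ directly, which after one integration by parts contradicts~\eqref{absu}. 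Your argument is a valid alternative that makes the role of the decay of $U_0$ explicit; the paper's is shorter and avoids the singular-integral machinery.
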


\begin{proof}
The proof has a similar structure to that of Proposition~\ref{prop:1ae}.

{\it Step 1.}
First of all, by estimates~\eqref{uoto} and \eqref{est:psi0}, in the case $\Omega=B_k$ and $\lambda=1$, we
have 
\begin{equation}
\label{est:psi}
\int_{B_k}|\nabla\Psi_k|^2+\int_{B_k}\Psi_k^2
\le 2\Bigl(\|\Theta_0\|_{L^\infty(\R^3)}^2\int_{B_k}|V_k|^2+\int_{\R^3} |\Theta_0U_0|^2\Bigr).
\end{equation}
Hence, it is sufficient to prove that
\begin{equation}
\label{APBv}
\int_{B_k}\bigl(\frac12|V_k|^2+|\nabla V_k|^2\bigr) \le C_1. 
\end{equation}
With a slight change of notations with respect to  Proposition~\ref{prop:1ae}, we now set
\[
J_k:=\Bigl(\int_{B_k}\bigl(\frac12|V_k|^2+|\nabla V_k|^2\bigr)\Bigr)^{1/2},
\quad\text{and}\quad
L_k:=\Bigl(\int_{B_k}\bigl(\frac12 \Psi_k^2+|\nabla \Psi_k|^2\bigr)\Bigr)^{1/2}
\]
and
\[
\widehat V_k:=\frac{V_k}{J_k},
\quad
\widehat P_k=\frac{P_k}{J_k^2},
\quad\text{and}\quad
\widehat\Psi_k:=\frac{\Psi_k}{L_k}.
\]
Let us assume, by contradiction, that \eqref{APBv} does not hold. 
Thus, there exists a subsequence of solutions 
$(V_k,\Psi_k)\in {\bf H}(B_k)$ 
of problem~\eqref{SSr}-\eqref{SSr:D} with $\Omega=B_k$ and $\rho=\rho_k$
such that 
\[
J_k\to+\infty.
\]
The boundedness of the sequence $(\widehat V_k,\widehat \Psi_k)$ in ${\bf H}(B_k)$  (or, more precisely, of the sequence obtained extending $(\widehat V_k,\widehat \Psi_k)$ to the whole $\R^3$ via the classical extension theorem for Sobolev spaces), implies that there exists 
$(\widetilde V,\widetilde \Psi)\in {\bf H}(\R^3)$, such that, after extraction of a subsequence, 
\[
(\widehat V_k,\widehat \Psi_k)\to (\widetilde V,\widetilde\Psi)
\]
weakly in ${\bf H}(\R^3)$ and strongly in ${\bf L}^p_{\rm loc}(\R^3)$, for $2\le p<6$. 
The divergence-free condition for $\widetilde V$ follows from the fact that,
for every test function $\varphi\in C^\infty_0(\R^3)$, one has
$\int  \widetilde V\cdot \nabla \varphi=\lim_k\int \widetilde V_k\cdot \nabla \varphi=0$.

{\it Step 2. Excluding that  $\limsup_{k\to+\infty}J_k/L_k<\infty$.}
Assume for the moment that $\limsup_{k\to+\infty}J_k/L_k<\infty$.
So we must have also $L_k\to+\infty$.
Then, after extraction, we have $J_k/L_k\to \gamma$, for some real $\gamma>0$,
because of inequality~\eqref{est:psi}.
But the following identity holds true, just like~\eqref{ener-Psi},
\begin{equation}
\label{ener-Psik2}
\int_{B_k} |\nabla \Psi_k|^2+\frac{1}{2}\int_{B_k} \Psi_k^2
+\int_{B_k} \nabla\cdot[\Theta_0(V_k+U_0)]\Psi_k=0.
\end{equation}
Let us divide it by $L_k^2$ and take $k\to+\infty$.
We claim that
\[
\frac{1}{L_k^2}\int_{B_k}\nabla\cdot(\Theta_0 V_k)\Psi_k
=\frac{1}{L_k^2}\int_{B_k}\nabla\Theta_0\cdot V_k\Psi_k\to\gamma\int_{\R^3}(\nabla\Theta_0\cdot \widetilde V)\widetilde\Psi.
\]
Indeed, let $\Omega$ be a bounded domain and $k$ large enough so that $\Omega\subset B_k$. 
We make use of the fact that condition~\eqref{uoto} 
implies $\nabla\Theta_0\in L^4(\R^3)$, and that $H^1(\R^3)$ is continuously embedded
in $L^{8/3}(\R^3)$. So we have
\begin{equation}
\label{calula}
\begin{split}
\Bigl|\frac{1}{L_k^2}\int_{B_k}(\nabla\Theta_0\cdot & V_k)\Psi_k-\gamma\int_{\R^3}(\nabla\Theta_0\cdot \widetilde V)\widetilde\Psi\Bigr|\\
&\le
\Bigl|\frac{J_k}{L_k}\int_{B_k}(\nabla\Theta_0\cdot \widehat V_k)\widehat\Psi_k-\gamma\int_{B_k}(\nabla\Theta_0\cdot \widetilde V)\widetilde\Psi\Bigr| \\
&\qquad \qquad +\gamma\|\nabla\Theta_0\|_{L^4(B_k^c)}
 \|\widetilde V\|_{L^{8/3}(\R^3)}\|\widetilde \Psi\|_{L^{8/3}(\R^3)}\\
&\le
\Bigl|\int_{\Omega}\Bigl(\frac{J_k}{L_k}(\nabla\Theta_0\cdot \widehat V_k)\widehat\Psi_k-\gamma(\nabla\Theta_0\cdot \widetilde V)\widetilde\Psi\Bigr)\Bigr|
+C\gamma\|\nabla\Theta_0\|_{L^4(\Omega^c)},
\end{split}
\end{equation}
because $\widehat V_k$ and $\widehat \Psi_k$ can be extended
to $\R^3$ and
that such extensions are bounded in $H^1(\R^3)$, and so in $L^{8/3}(\R^3)$, 
uniformly with respect to $k$.
The first term in the right-hand side 
tends to zero as $k\to+\infty$ by the compact embedding of ${\bf H}(\Omega)$ 
into ${\bf L}^{8/3}(\Omega)$. The second term 
can be taken as small as we wish, taking
$\Omega=B_R$, $k>R$, choosing a radius $R>0$ large enough.

Hence, we get from~\eqref{ener-Psik2}
\begin{equation}
\label{gno}
1=\gamma\int_{\R^3}\nabla\cdot(\Theta_0\widetilde V)\widetilde\Psi.
\end{equation}

Let us consider an arbitrary test function $\chi\in C^\infty_c(\R^3)$ and $k$ large enough so that the support of $\chi$
is contained in $B_k$.
From the second equation of~\eqref{SSr}, written for $(V_k,\Psi_k)$, 
we obtain
\[
\int_{B_k} \nabla\Psi_k \cdot\nabla \chi=
\int_{B_k}[\Psi_k+x\cdot\nabla\Psi_k]\chi
-\int_{B_k}\nabla\cdot[(\Psi_k+\Theta_0)(V_k+U_0)]\chi.
\]
Dividing by $L_k^2$ and letting $k\to+\infty$, all the integrals $\frac{1}{L_k^2}\int_{B_k}\ldots$
with linear terms in $V_k$ and $\Psi_k$ in the above identity go to zero.
On the other hand, proceedings as in~\eqref{calula},
\[
\frac{1}{L_k^2}\int_{B_k} \nabla\cdot(\Psi_k V_k)\chi
\to \gamma\int_{\R^3}\nabla\cdot(\widetilde \Psi \widetilde V)\chi.
\]
Then it follows that
\[
\gamma\int_{\R^3} \nabla\cdot(\widetilde\Psi\widetilde V) \chi=0
,\qquad
\text{for all $\chi\in \mathcal{D}(\R^3)$}.
\]
But $\gamma\not=0$ by~\eqref{gno}, so
\[
\int_{\R^3} \nabla\cdot (\widetilde\Psi\widetilde V)\chi=0
,\qquad
\text{for all $\chi\in \mathcal{D}(\R^3)$}.
\]
This in turn implies that
$\widetilde V\cdot\nabla \widetilde\Psi=0$.
But then
\[
\int_{\R^3}\nabla\cdot(\Theta_0\widetilde V)\widetilde\Psi
=-\int_{\R^3}\Theta_0\widetilde V\cdot\nabla\Psi=0
\]
which contradicts~\eqref{gno}.
This excludes that $\limsup_{k\to+\infty}J_k/L_k<\infty$.

{\it Step 3. }
We reduced ourselves to the case $\limsup_{k\to+\infty}J_k/L_k=\infty$.
Multiplying the first equation in~\eqref{SSr} by $V_k$ 
and integrating on $B_k$ gives, in a similar way as we did in~\eqref{ener-V}
\begin{equation}
\label{jk2}
J_k^2=\int_{B_k}F_0V_k-\int_{B_k}(V_k\cdot\nabla U_0)\cdot V_k
+\int_{B_k}(\Psi_k+\Theta_0)\rho_k\nabla\bigl({|\cdot|^{-1}}\bigr)\cdot V_k+\langle F,V_k\rangle.
\end{equation}
Applying the Hardy inequality (\emph{cf.} \eqref{Hardy})  we get, as $k\to+\infty$,
\[
\begin{split}
\Bigl|
\frac{1}{J_k^2}\int_{B_k}(\Psi_k\rho_k\nabla(|\cdot|^{-1})\cdot V_k\Bigr|
&\le C\frac{L_k}{J_k}\Bigl\|\frac{\widehat\Psi_k}{|\cdot|}\Bigr\|_{L^2(B_k)} \, 
\Bigl\|\frac{\widehat V_k}{|\cdot|}\Bigr\|_{L^2(B_k)}\\
&\le C\frac{L_k}{J_k}\|\nabla \widehat\Psi_k\|_{L^2(B_k)} \|\nabla \widehat V_k\|_{L^2(B_k)}\to0.
\end{split}
\]
Moreover,
\[
\frac{1}{J_k^2}\int_{B_k}(V_k\cdot\nabla U_0)\cdot V_k
\to \int_{\R^3}(\widetilde V\cdot\nabla U_0)\cdot \widetilde V
\]
as one easily checks by reproducing the same calculations as in~\eqref{calula}, using that $\nabla U_0\in L^4(\R^3)$, by
condition~\eqref{uoto}. 
Therefore, dividing equation~\eqref{jk2} by $J_k^2$ and letting $k\to+\infty$
we find the identity
\begin{equation}
\label{absu}
    \int_{\R^3}(\widetilde V\cdot\nabla U_0)\cdot\widetilde V=-1.
\end{equation}

Dividing by $J_k^2$ the first equation in~\eqref{SSr} satisfied by $(V_k,\Psi_k)$, we get
\[
\begin{split}
&\widehat V_k\cdot\nabla \widehat V_k+\nabla \widehat P_k\\
&\qquad=\frac{1}{J_k}\Bigl(\Delta\widehat V_k+\widehat V_k+x\cdot\nabla\widehat V_k+\frac{F_0}{J_k}
  -U_0\cdot\nabla \widehat V_k-\widehat V_k\cdot \nabla U_0\\
&\qquad\qquad\qquad\qquad  +\frac{L_k}{J_k}\widehat\Psi_k\rho_k\nabla(|\cdot|^{-1})
  +\frac{1}{J_k}\left(\Theta_0\rho_k\nabla(|\cdot|^{-1})+F\right)
  \Bigr).
\end{split}
\]
Writing the weak formulation of the above equation, {\it i.e.}, testing with an arbitrary $\eta\in C^\infty_{0,\sigma}(\R^3)$ and using that
$J_k\to+\infty$ and that $L_k/J_k$ remains bounded as $k\to+\infty$,
we deduce, in the limit:
\[
\int_{\R^3} (\widetilde V\cdot\nabla \widetilde V)\cdot \eta=0,
\qquad\text{for all $\eta\in C^\infty_{0,\sigma}(\R^3)^3$}.
\]
But $U_0\in L^4_\sigma(\R^3)$, $\widetilde V\in L^4(\R^3)$ and $\nabla\widetilde V\in L^2(\R^3)$. Approximating 
$U_0$ in the $L^4$-norm by test functions implies
\[
\int_{\R^3}(\widetilde V\cdot\nabla \widetilde V)\cdot U_0=0.
\]
This is in contradiction with~\eqref{absu}.
\end{proof}

\begin{theorem}
\label{prop:exi-R3}
Assume that $U_0,$ and $\Theta_0$ satisfy~\eqref{uoto}. 
Assume also that $F\in L^p(\R^3)^3$, for some $p\in [6/5,2]$.
Then elliptic system~\eqref{DSS} possess at least a solution $(V,\Psi)\in {\bf H}(\R^3)$.
\end{theorem}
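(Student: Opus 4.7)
The plan is to construct $(V,\Psi)$ as a weak limit in ${\bf H}(\R^3)$ of the sequence of solutions supplied by the truncated problem on invading balls. Note first that $F\in L^p(\R^3)^3$ with $p\in[6/5,2]$ embeds into $H^{-1}(\R^3)^3$ via Sobolev duality, so in particular $F\in H^{-1}(B_k)^3$ for every $k\ge 1$. Proposition~\ref{prop:exi-rho}, applied with $\Omega=B_k$ and $\rho=\rho_k$, then produces a solution $(V_k,\Psi_k)\in{\bf H}(B_k)$ of problem~\eqref{SSr}--\eqref{SSr:D}. After extension by zero to $\R^3$, Proposition~\ref{prop:invading} provides the uniform bound $\|V_k\|_{H^1(\R^3)^3}^2+\|\Psi_k\|_{H^1(\R^3)}^2\le C_1$. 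By weak compactness and the Rellich–Kondrachov theorem, one can extract a subsequence such that $(V_k,\Psi_k)\to(V,\Psi)$ weakly in $H^1(\R^3)^4$ and strongly in ${\bf L}^q_{\rm loc}(\R^3)$ for every $q\in[2,6)$. The divergence-free condition $\nabla\cdot V=0$ passes to the limit, so $(V,\Psi)\in{\bf H}(\R^3)$.

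To show that $(V,\Psi)$ is a weak solution of~\eqref{DSS}, fix test fields $\eta\in C^\infty_{0,\sigma}(\R^3)^3$ and $\chi\in C^\infty_0(\R^3)$ with supports contained in some ball $B_R$, and take $k\ge R$, so $(\eta,\chi)$ is admissible in the weak formulation of the truncated system for $(V_k,\Psi_k)$. The linear terms (including $\int x\cdot\nabla V_k\cdot\eta$, where the multiplier $x\cdot\eta$ is bounded with compact support) pass to the limit by weak $H^1$-convergence. After one integration by parts, the quadratic terms reduce to $\int V_k\otimes V_k\colon\nabla\eta$, $\int(V_k\cdot U_0+U_0\cdot V_k)\colon\nabla\eta$, and $\int\Psi_k V_k\cdot\nabla\chi$, each of which converges thanks to strong ${\bf L}^q_{\rm loc}$-convergence together with the $L^\infty$-bounds~\eqref{uoto} on $U_0$. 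The terms $V_k\cdot\nabla U_0$ and $\nabla\cdot(\Theta_0 V_k)=\nabla\Theta_0\cdot V_k$ converge by strong $L^2_{\rm loc}$-convergence of $V_k$, using $\nabla U_0,\nabla\Theta_0\in L^\infty(\R^3)$.

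The main obstacle is the passage to the limit in the singular integrals
\[
\int \Psi_k\rho_k\nabla(|\cdot|^{-1})\cdot\eta\qquad\text{and}\qquad\int \Theta_0\rho_k\nabla(|\cdot|^{-1})\cdot\eta.
\]
I would treat both by splitting the domain into $\{|x|<\varepsilon\}$ and $\{|x|\ge\varepsilon\}$. On the inner region, the Hardy inequality gives $\|\Psi_k/|\cdot|\|_{L^2(\R^3)}\le C\|\nabla\Psi_k\|_{L^2(\R^3)}\le C$ uniformly in $k$, and~\eqref{uoto} provides $\Theta_0/|\cdot|\in L^2(\R^3)$; pairing with $\|\eta/|\cdot|\|_{L^2(B_\varepsilon)}=O(\varepsilon^{1/2})$ bounds both contributions by $C\varepsilon^{1/2}$, uniformly in $k$. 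On the outer region, as soon as $k\ge 1/\varepsilon$, the cut-off $\rho_k$ is identically $1$ on $\{|x|\ge\varepsilon\}$, and the field $\nabla(|\cdot|^{-1})\cdot\eta$ is bounded and supported in the compact annulus $\{\varepsilon\le|x|\le R\}$, so the strong $L^2_{\rm loc}$-convergence of $\Psi_k$ yields the desired limit. Sending $k\to+\infty$ first and then $\varepsilon\to 0^+$ concludes the passage to the limit and shows that $(V,\Psi)$ satisfies~\eqref{DSS} in the sense of distributions, which completes the proof.
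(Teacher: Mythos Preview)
Your proof is correct and follows essentially the same scheme as the paper: invoke Proposition~\ref{prop:exi-rho} on $B_k$ with $\rho=\rho_k$, use the uniform bound from Proposition~\ref{prop:invading}, extract a weak $H^1$-limit with local strong $L^q$-convergence, and pass to the limit in the weak formulation. The only notable difference is in the passage to the limit for the singular term $\int \Psi_k\rho_k\nabla(|\cdot|^{-1})\cdot\eta$: the paper argues via Lorentz duality, using that $\Psi_k\to\Psi$ in $L^{3,1}$ on the (compact) support of the test function and that $\rho_k\nabla(|\cdot|^{-1})$ stays bounded in $L^{3/2,\infty}$, whereas you use an explicit $\varepsilon$-splitting together with Hardy's inequality. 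Your route is slightly more elementary and sidesteps Lorentz spaces; both arguments deliver the same conclusion.
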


\begin{proof}
Applying Proposition~\ref{prop:exi-rho} with $\Omega=B_k$ and $\rho=\rho_k$ ($k=1,2,\ldots)$, we get a sequence of solutions $(V_k,\Psi_k)\in {\bf H}(B_k)$.
By Proposition~\ref{prop:invading}, such a sequence is bounded in the ${\bf H}(B_k)$-norm
by a constant independent on~$k$. This implies that there exist $(V,\Psi)\in{\bf H}(\R^3)$ and a subsequence, still denoted $(V_k,\Psi_k)$,  such that
$(V_k,\Psi_k)\to (V,U)$ weakly in ${\bf H}(\Omega)$, for any bounded domain 
$\Omega\subset \R^n$.

It remains to prove that $(V,\Psi)$ is a weak solution of the elliptic problem~\eqref{DSS}.
Thus, we have to pass to the limit in all the terms of the variational formulation of problem~\eqref{SSr}-\eqref{SSr:D}
considered on the balls $B_k$. 

To this purpose, we make use of the compact embedding 
$H^1_{\rm loc}(\R^3)\subset L^p_{\rm loc}(\R^3)$, valid for any with $p\in [2,6)$.
If $\chi\in C^\infty_0(\R^3)$ is a test function then we get,
after a new extraction, $\Psi_k\to\Psi$ in the Lorentz space~$L^{3,1}(\R^3)$.
But $\rho_k\nearrow 1$ a.e. in $\R^3$ and so $\rho_k\nabla(|\cdot|^{-1})\to\nabla(|\cdot|^{-1})$ in $L^{3/2,\infty}(\R^3)$.
Hence,
\[
\int_{\R^3} \Psi_k\rho_k\nabla(|\cdot|^{-1})\chi\to\int_{\R^3}\Psi\nabla(|\cdot|^{-1})\chi. 
\] 
Similar considerations 
prove that all the other terms also pass to the limit. This gives the result.
\end{proof}

\section{Conclusion}

This section is devoted to deduce the result
of Theorem~\ref{th:main} from  Theorem~\ref{prop:exi-R3}.

\begin{proof}[Proof of Theorem~\ref{th:main}.]
Let $(V,\Psi)$ be as in Theorem~\ref{prop:exi-R3} and set
\[
u(x,t):=\frac{1}{\sqrt{2t}}(U_0+V)\Bigl(\frac{x}{\sqrt{2t}}\Bigr),
\quad
\text{and}
\quad
\theta(x,t):=\frac{1}{\sqrt{2t}}(\Theta_0+\Psi)\Bigl(\frac{x}{\sqrt{2t}}\Bigr).
\]
We have $(U_0,\Theta_0)\in {\bf L}^{3,\infty}(\R^3)$ by Proposition \ref{prop:heat} and 
$(V,\Psi)\in {\bf H}(\R^3)\subset  {\bf L}^{3,\infty}(\R^3)$.
Then, from the scaling properties 
$(u,\theta)\in L^\infty(\R^+,{\bf L}^{3,\infty}(\R^3))$.

Let us now address the continuity with respect to~$t$  and we detail only the continuity property at $0$, that is important to give a sense to the
initial condition $(u_0,\theta_0)|_{t=0}=(u_0,\theta_0)$.
We have 
$$\frac{1}{\sqrt{2t}}(U_0,\Theta_0)\bigl(\frac{x}{\sqrt{2t}}\bigr)
=e^{t\Delta}(u_0,\Theta_0)\to (u_0,\theta_0) \quad\text{as} \quad t\to0+
$$
in the weak-* topology of ${\bf L}^{3,\infty}(\R^3)$. 
Let us check that
$\frac{1}{\sqrt{2t}}(V,\Psi)\bigl(\frac{x}{\sqrt{2t}}\bigr)\to0$ in the same topology.
Indeed, if $\varphi\in {\bf L}^{3/2,1}(\R^3)$, we can approximate it
in the ${\bf L}^{3/2,1}$-norm with functions 
$\varphi_\epsilon\in {\bf L}^2\cap{\bf L}^{3/2,1}(\R^3)$.
Then is is enough to observe that 
$$
|\int_{\R^3}\frac{1}{\sqrt{2t}}(V,\Psi)\bigl(\frac{\cdot}{\sqrt{2t}}\bigr)\cdot\varphi_\epsilon|\le Ct^{1/4}\to0
\quad \text{as} \quad t\to0+.
$$
Therefore, $(u,\theta)\in C_w(\R^+,{\bf L}^{3,\infty}(\R^3))$.

Equalities~\eqref{timeest} follow immediately from the fact that
$(V,\Psi)=(U-U_0,\Theta-\Theta_0)\in {\bf H}^1$ and from the scaling properties of
the $L^2$ and the $\dot H^1$-norms.
\end{proof}


\begin{thebibliography}{10}

\bibitem{AM84}
{\sc Amick, Ch.~J.}
\newblock Existence of solutions to the nonhomogeneous steady Navier–Stokes
equations.
\newblock
{\em Indiana Univ. Math. J. 33}, (1984), 817–830.

\bibitem{BP23}
{\sc Bradshaw, Z., and Phelps, P.}
\newblock Spatial decay of discretely self-similar solutions to the Navier-Stokes equations.
\newblock {\em  Pure Appl. Anal. 5}, 2 (2023),  377--407.


\bibitem{BT17}
{\sc Bradshaw, Z., and Tsai, T.-P.}
\newblock Forward discretely self-similar solutions of the {N}avier-{S}tokes
  equations {II}.
\newblock {\em Ann. Henri Poincar\'{e} 18}, 3 (2017), 1095--1119.

\bibitem{BT18}
{\sc Bradshaw, Z., and Tsai, T.-P.}
\newblock Discretely self-similar solutions to the {N}avier-{S}tokes equations
  with {B}esov space data.
\newblock {\em Arch. Ration. Mech. Anal. 229}, 1 (2018), 53--77.

\bibitem{BT19}
{\sc Bradshaw, Z., and Tsai, T.-P.}
\newblock Discretely self-similar solutions to the {N}avier-{S}tokes equations
  with data in {$L_{\rm loc}^2$} satisfying the local energy inequality.
\newblock {\em Analysis \& PDE 12}, 8 (2019), 1943--1962.

\bibitem{B09}
{\sc Brandolese, L.}
\newblock Fine properties of self-similar solutions of the {N}avier-{S}tokes
  equations.
\newblock {\em Arch. Ration. Mech. Anal. 192}, 3 (2009), 375--401.

\bibitem{BS12}
{\sc Brandolese, L., and Schonbek, M.~E.}
\newblock Large time decay and growth for solutions of a viscous {B}oussinesq
  system.
\newblock {\em Trans. Amer. Math. Soc. 364}, 10 (2012), 5057--5090.

\bibitem{CD80}
{\sc Cannon, J.~R., and DiBenedetto, E.}
\newblock The initial value problem for the {B}oussinesq equations with data in
  {$L\sp{p}$}.
\newblock In {\em Approximation methods for {N}avier-{S}tokes problems ({P}roc.
  {S}ympos., {U}niv. {P}aderborn, {P}aderborn, 1979)}, vol.~771 of {\em Lecture
  Notes in Math}. Springer, Berlin, 1980, pp.~pp 129--144.

\bibitem{C04}
{\sc Cannone, M.}
\newblock Harmonic analysis tools for solving the incompressible
  {N}avier-{S}tokes equations.
\newblock In {\em Handbook of mathematical fluid dynamics. {V}ol. {III}}.
  North-Holland, Amsterdam, 2004, pp.~161--244.

\bibitem{CMP94}
{\sc Cannone, M., Meyer, Y., and Planchon, F.}
\newblock Solutions auto-similaires des \'{e}quations de {N}avier-{S}tokes.
\newblock In {\em S\'{e}minaire sur les \'{E}quations aux {D}\'{e}riv\'{e}es
  {P}artielles, 1993--1994}. \'{E}cole Polytech., Palaiseau, 1994, pp.~Exp. No.
  VIII, 12.

\bibitem{CP96}
{\sc Cannone, M., and Planchon, F.}
\newblock Self-similar solutions for {N}avier-{S}tokes equations in {${\bf
  R}^3$}.
\newblock {\em Comm. Partial Differential Equations 21}, 1-2 (1996), 179--193.

\bibitem{DP08}
{\sc Danchin, R., and Paicu, M.}
\newblock Existence and uniqueness results for the {B}oussinesq system with
  data in {L}orentz spaces.
\newblock {\em Phys. D 237}, 10-12 (2008), 1444--1460.

\bibitem{AF11}
{\sc de~Almeida, M.~F., and Ferreira, L. C.~F.}
\newblock On the well posedness and large-time behavior for {B}oussinesq
  equations in {M}orrey spaces.
\newblock {\em Differential Integral Equations 24}, 7-8 (2011), 719--742.

\bibitem{FS12}
{\sc Feireisl, E., and Schonbek, M.~E.}
\newblock On the {O}berbeck-{B}oussinesq approximation on unbounded domains.
\newblock In {\em Nonlinear partial differential equations}, vol.~7 of {\em
  Abel Symp.} Springer, Heidelberg, 2012, pp.~131--168.

\bibitem{FV10}
{\sc Ferreira, L. C.~F., and Villamizar-Roa, E.~J.}
\newblock On the stability problem for the {B}oussinesq equations in
  weak-{$L^p$} spaces.
\newblock {\em Commun. Pure Appl. Anal. 9}, 3 (2010), 667--684.

\bibitem{GM89}
{\sc Giga, Y., and Miyakawa, T.}
\newblock Navier-{S}tokes flow in {$\mathbf{R}^3$} with measures as initial
  vorticity and {M}orrey spaces.
\newblock {\em Comm. Partial Differential Equations 14}, 5 (1989), 577--618.

\bibitem{JS14}
{\sc Jia, H., and \v{S}ver\'{a}k, V.}
\newblock Local-in-space estimates near initial time for weak solutions of the
  {N}avier-{S}tokes equations and forward self-similar solutions.
\newblock {\em Invent. Math. 196}, 1 (2014), 233--265.

\bibitem{KP83}
{\sc Kapitanski\u{\i}, L., and Piletskas, K.}
\newblock Spaces of solenoidal vector fields and boundary value problems for
  the {N}avier-{S}tokes equations in domains with noncompact boundaries.
\newblock vol.~159. 1983, pp.~5--36.
\newblock Boundary value problems of mathematical physics, 12.

\bibitem{K99}
{\sc Karch, G.}
\newblock Scaling in nonlinear parabolic equations.
\newblock {\em J. Math. Anal. Appl. 234}, 2 (1999), 534--558.

\bibitem{KP08}
{\sc Karch, G., and Prioux, N.}
\newblock Self-similarity in viscous {B}oussinesq equations.
\newblock {\em Proc. Amer. Math. Soc. 136}, 3 (2008), 879--888.

\bibitem{K92}
{\sc Kato, T.}
\newblock Strong solutions of the {N}avier-{S}tokes equation in {M}orrey
  spaces.
\newblock {\em Bol. Soc. Brasil. Mat. (N.S.) 22}, 2 (1992), 127--155.

\bibitem{KT16}
{\sc Korobkov, M., and Tsai, T.-P.}
\newblock Forward self-similar solutions of the {N}avier-{S}tokes equations in
  the half space.
\newblock {\em Analysis \& PDE 9}, 8 (2016), 1811--1827.


\bibitem{L19}
{\sc Lai, C.-C.}
\newblock Forward discretely self-similar solutions of the MHD equations and the viscoelastic Navier-Stokes equations with damping.
\newblock {\em J. Math. Fluid Mech.~21}, 38 (2019).


\bibitem{LMZ19}
{\sc Lai, B., Miao, C., and Zheng, X.}
\newblock Forward self-similar solutions of the fractional {N}avier-{S}tokes
  equations.
\newblock {\em Adv. Math. 352\/} (2019), 981--1043.

\bibitem{LR02}
{\sc Lemari\'{e}-Rieusset, P.~G.}
\newblock {\em Recent developments in the {N}avier-{S}tokes problem}, vol.~431
  of {\em Chapman \& Hall/CRC Research Notes in Mathematics}.
\newblock Chapman \& Hall/CRC, Boca Raton, FL, 2002.

\bibitem{LR16}
{\sc Lemari\'{e}-Rieusset, P.~G.}
\newblock {\em The {N}avier-{S}tokes problem in the 21st century}.
\newblock CRC Press, Boca Raton, FL, 2016.


\bibitem{T14}
{\sc Tsai, T.-P.}
\newblock Forward discretely self-similar solutions of the {N}avier-{S}tokes
  equations.
\newblock {\em Comm. Math. Phys. 328}, 1 (2014), 29--44.

\bibitem{Tsai-preprint}{\color{blue}
{\sc Tsai, T.-P.}
\newblock Large discretely self-similar solutions to Oberbeck-Boussinesq system with Newtonian gravitational field.
\newblock preprint
\newblock arXiv:2409.14007  [math.AP]}

\bibitem{WK17}
{\sc Wr\'{o}blewska-Kami\'{n}ska, A.}
\newblock The asymptotic analysis of the complete fluid system on a varying
  domain: from the compressible to the incompressible flow.
\newblock {\em SIAM J. Math. Anal. 49}, 5 (2017), 3299--3334.

\bibitem{Y00}
{\sc Yamazaki, M.}
\newblock The {N}avier-{S}tokes equations in the weak-{$L^n$} space with
  time-dependent external force.
\newblock {\em Math. Ann. 317}, 4 (2000), 635--675.


\bibitem{YY24}{\color{blue}
{\sc Yang, Y.}
\newblock Forward self-similar solutions of the MHD-Boussinesq system with Newtonian gravitational field.
\newblock preprint
\newblock 	arXiv:2410.05847 [math.AP]
}

\end{thebibliography}

\end{document}